\title{Brill-Noether varieties of $k$-gonal curves}
\author[N. Pflueger]{Nathan Pflueger}\address{Department of Mathematics, Brown University, Box
1917, Providence, RI 02912}\email{pflueger@math.brown.edu}
\date{\today}
\newtheorem{thm}{Theorem}[section]
\newtheorem{lemma}[thm]{Lemma}
\newtheorem{prop}[thm]{Proposition}
\newtheorem{cor}[thm]{Corollary}
\newtheorem{eg}[thm]{Example}
\newtheorem{conj}[thm]{Conjecture}
\newtheorem{defn}[thm]{Definition}
\newtheorem{rem}[thm]{Remark}
\newtheorem{sit}[thm]{Situation}
\newtheorem{qu}[thm]{Question}
\newcommand{\ZZ}{\textbf{Z}}
\newcommand{\PP}{\textbf{P}}
\newcommand{\iou}[1][]{
    \ifthenelse{\equal{#1}{}}{{\color{blue}\{IOU\}}}
    {{\color{blue}\{IOU: #1\}}}
}
\newcommand{\ovr}{\overline{\rho}_{g,k}(d,r)}
\newcommand{\unr}{\underline{\rho}_{g,k}(d,r)}
\newcommand{\Gg}{\Gamma_{g,k,\ell}}
\DeclareMathOperator{\Pic}{Pic}
\DeclareMathOperator{\cd}{cd}
\DeclareMathOperator{\Spec}{Spec}
\DeclareMathOperator{\chr}{char }
\begin{document}
\maketitle

\begin{abstract}
We consider a general curve of fixed gonality $k$ and genus $g$. We propose an estimate $\ovr$ for the dimension of the variety $W^r_d(C)$ of special linear series on $C$, by solving an analogous problem in tropical geometry. Using work of Coppens and Martens, we prove that this estimate is exactly correct if $k \geq \frac15 g + 2$, and is an upper bound in all other cases. We also completely characterize the cases in which $W^r_d(C)$ has the same dimension as for a general curve of genus $g$.
\end{abstract}

\section{Introduction}

We are concerned in this paper with the varieties $W^r_d(C)$ of special linear series on a general curve of fixed genus $g$ and gonality $k$ over an algebraically closed field $K$ with mild restrictions on the characteristic. See Situation \ref{sit_main} for the necessary hypotheses.

For a general curve $C$ of genus $g$, we have $k = \lfloor \frac{g+3}{2} \rfloor$ and the Brill-Noether theorem \cite{gh} says that $\dim W^r_d(C)$ is equal to the Brill-Noether number
$$
\rho_g(d,r) = g - (r+1)(g-d+r),
$$

unless $\rho_g(d,r) < 0$, in which case $W^r_d(C) = \emptyset$.

Our objective is to compute $\dim W^r_d(C)$ for non-generic values of $k$. We propose a modification $\ovr$ of the Brill-Noether number, incorporating the value of $k$, and prove the following results. By convention, a negative estimate on $\dim W^r_d(C)$ means that $W^r_d(C)$ is empty.

\begin{thm} \label{thm_main} In Situation \ref{sit_main},
$\dim W^r_d(C) \leq \ovr$.
\end{thm}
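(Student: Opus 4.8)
The plan is to prove the inequality by degeneration to tropical geometry, reducing it to a purely combinatorial computation on a chain of loops. I would fix the metric graph $\Gg$: a chain of $g$ loops whose edge lengths (equivalently, the torsion orders of the loops) are chosen so that $\Gg$ carries a divisor of degree $k$ and rank $1$, forcing its gonality to be $k$. The reduction relies on a special $k$-gonal curve $C$ over a complete valued field admitting a regular semistable model with skeleton $\Gg$, as arranged in Situation~\ref{sit_main}. The first step is then to pass to tropicalizations: the tropicalization of the algebraic locus $W^r_d(C)$ is a polyhedral subset of the tropical Jacobian of the same dimension as $W^r_d(C)$ (by the structure theorem of tropical geometry), while Baker's specialization lemma — rank does not decrease under specialization — shows that it lands inside the tropical Brill--Noether locus $W^r_d(\Gg)$. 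Hence $\dim W^r_d(C) \le \dim W^r_d(\Gg)$. Because the locus of $k$-gonal curves is irreducible (irreducibility of the Hurwitz space) and $\dim W^r_d$ is upper semicontinuous in families, it suffices to establish this bound for one such curve, so the whole theorem reduces to showing $\dim W^r_d(\Gg) \le \ovr$.

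The second step is to make the tropical locus combinatorial. Every divisor class of degree $d$ on $\Gg$ has a unique $w_g$-reduced representative, and I would use this normal form to attach to each class a discrete datum — a lattice path, equivalently a filling of a Young diagram $\lambda$ inside the $(r+1)\times(g-d+r)$ rectangle — recording how the reduced divisor meets each loop. The rank-$\ge r$ condition translates into a condition on this datum, stratifying $W^r_d(\Gg)$ into the loci $\wlgw$. The point is that the edge-length (torsion) data defining the gonality-$k$ structure is exactly what constrains which fillings occur: it forces a displacement inequality controlled by the statistic $\disp$ and the parameter $k$, so that the admissible fillings become precisely the $k$-bounded displacement tableaux.

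The third step is the dimension count itself. For each admissible combinatorial type I would bound $\dim\wlgw$ by the number of independent continuous parameters — the positions of divisor chips around the loops — that survive the reducedness and rank constraints, and then maximize over all admissible $\lambda$ and all admissible tableaux. This maximum should collapse to the closed formula $\ovr$, with the convention that the absence of any admissible tableau yields the empty set, i.e.\ a negative estimate. This final combinatorial optimization — correctly accounting for how the displacement condition imposed by $k$ trims each stratum, and verifying that the maximum matches the closed form $\ovr$ — is the main obstacle and the technical heart of the argument; by contrast, the specialization and semicontinuity inputs of the first step are standard in this setting.
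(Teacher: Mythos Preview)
Your overall strategy matches the paper's almost exactly: degenerate to the chain of loops $\Gg$ with torsion order $k$, bound $\dim W^r_d(C)$ by $\dim W^r_d(\Gg)$ via specialization, reduce the tropical dimension to a count on $k$-uniform displacement tableaux (the paper imports this from \cite{pfl}), and then carry out the combinatorial optimization showing the minimum number of distinct labels on the $(r+1)\times(g-d+r)$ rectangle is $g-\ovr$. The semicontinuity/irreducibility step to pass from one curve to the general $k$-gonal curve is also the same.

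There is one genuine gap in your outline. You write that the existence of a $k$-gonal curve with skeleton $\Gg$ is ``arranged in Situation~\ref{sit_main}''. It is not: Situation~\ref{sit_main} only fixes the numerical data and declares $C$ to be a \emph{general} $k$-gonal curve. Producing a specific curve with skeleton $\Gg$ that actually has gonality $k$ is a separate, nontrivial step. Baker's specialization lemma goes the wrong way for this: a tropical $g^1_k$ on $\Gg$ does not automatically lift to an algebraic $g^1_k$ on $C$. The paper handles this by constructing a degree-$k$ tame harmonic morphism from $\Gg$ to a segment and invoking the lifting theorem of \cite{abbr} to obtain a degree-$k$ map $C\to\PP^1$ whose tropicalization is that morphism (Lemma~\ref{lem_lift}); the tameness requirement is precisely the source of the characteristic restrictions in Situation~\ref{sit_main} (Lemma~\ref{lem_lexists}). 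The paper then checks that the gonality is not smaller than $k$ by applying the dimension bound itself in the case $r=1$. Without this lifting input you have no $k$-gonal curve to which your semicontinuity argument applies, and no explanation for why the characteristic hypotheses appear at all.
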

In characteristic $0$, we may combine Theorem \ref{thm_main} with a lower bound from results of Coppens and Martens \cite{cm02}, to obtain the following sharp results.
\begin{thm} \label{thm_eq}
In Situation \ref{sit_main}, if $\chr K=0$ and either $k \leq 5$ or $k \geq \frac15 g + 2$, then $\dim W^r_d(C) = \overline{\rho}_{g,k}(d,r)$.
\end{thm}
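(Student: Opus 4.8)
The plan is to sandwich $\dim W^r_d(C)$ between two bounds that coincide in the stated range. Theorem \ref{thm_main} already supplies the upper bound $\dim W^r_d(C) \leq \ovr$, so what remains is a matching lower bound, which I expect to extract from the constructions of Coppens and Martens \cite{cm02}. The tropical computation expresses $\ovr$ as a maximum of the quantities
\[
\rho^\ell = g - (r+1-\ell)(g-d+r-\ell) - \ell k
\]
as $\ell$ ranges over $\{0, 1, \ldots, \min(r,\, g-d+r)\}$, with $\rho^0$ equal to the classical Brill-Noether number $\rho_g(d,r)$. The index $\ell$ has the geometric meaning of recording how many copies of the gonality pencil a linear series absorbs, and it is this interpretation that makes a lower bound accessible.

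For the lower bound I would invoke Coppens and Martens, who on a general $k$-gonal curve construct explicit families of special linear series by adding a varying effective divisor to an $\ell$-fold multiple of the $g^1_k$. Their dimension count shows that for each $\ell$ in an explicit admissible range $R$ — dictated by the requirement that the multiple of the pencil behave as on a rational normal scroll and that the residual series exist — the variety $W^r_d(C)$ carries a component of dimension $\rho^\ell$. Taking the best such index yields $\dim W^r_d(C) \geq \unr$, where $\unr = \max_{\ell \in R} \rho^\ell$. The characteristic-zero hypothesis enters precisely here, since the Coppens-Martens analysis relies on it.

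With both bounds in hand, $\unr \leq \dim W^r_d(C) \leq \ovr$, and the theorem reduces to the purely combinatorial assertion that $\unr = \ovr$ whenever $k \leq 5$ or $k \geq \tfrac15 g + 2$. Since $\rho^\ell$ is concave in $\ell$ — indeed $\rho^{\ell+1} - \rho^\ell = (r-\ell) + (g-d+r-\ell) - k$ drops by $2$ at each step — the maximum $\ovr$ is attained at a single index $\ell^\ast$, which I can write explicitly as the clamp of $r + 1 + \tfrac12(g-d-k)$ to the allowed interval. Equality $\unr = \ovr$ then holds exactly when $\ell^\ast$ lies in $R$, and I would verify this containment by comparing $\ell^\ast$ directly against the endpoints of $R$. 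The small-gonality cases $k \leq 5$ I would dispatch separately, since Coppens-Martens together with the classical results for $k = 2,3,4$ determine $\dim W^r_d(C)$ outright there.

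The main obstacle is this final containment step: pinning down the exact admissible range $R$ and showing that the optimizer $\ell^\ast$ falls inside it under precisely the hypothesis $k \leq 5$ or $k \geq \tfrac15 g + 2$. This is delicate inequality-chasing, and it is where the constant $\tfrac15$ is forced; outside this range $\ell^\ast$ can escape $R$, so that $\unr < \ovr$ becomes possible and the method recovers only the inequality of Theorem \ref{thm_main}. I would organize the verification by cases according to whether $\ell^\ast$ is constrained by $r$, by $g-d+r$, or by the scroll condition, keeping careful track of the floors throughout.
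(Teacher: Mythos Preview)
Your sandwich strategy is exactly the paper's, and Theorem~\ref{thm_cm} (the Coppens--Martens lower bound, with admissible set $R=\{0,1,r'-1,r'\}$) is indeed the other half of the vise. The gap is in your combinatorial reduction. You assert that the theorem reduces to the claim that $\unr=\ovr$ whenever $k\le 5$ or $k\ge\tfrac15 g+2$, and you plan to verify this by showing the optimizer $\ell^\ast$ always lands in $R$. That claim is false. Take $g=40$, $k=10$ (so $k=\tfrac15 g+2$), $r=6$, $d=39$: then $(a,b)=(7,7)$, the quantities $\rho^\ell$ for $\ell=0,\dots,6$ are $-9,-6,-5,-6,-9,-14,-21$, so $\ovr=-5$ is attained at $\ell^\ast=2\notin\{0,1,5,6\}$ while $\unr=-6$. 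Your containment check will therefore fail, and no amount of careful floor-tracking will rescue it.

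What you are missing is that the theorem, read with the paper's convention that a negative estimate means emptiness, does \emph{not} require $\unr=\ovr$ when $\ovr<0$: in that regime the upper bound of Theorem~\ref{thm_main} alone gives $W^r_d(C)=\emptyset$. The correct reduction is therefore to the disjunction ``$\unr=\ovr$ \emph{or} $\ovr<0$''. The paper proves this by assuming $\unr\neq\ovr$, which forces $2\le\ell_0\le r'-2$ (hence $|a-b|\le k-6$, already impossible for $k\le 5$), and then completing the square to obtain $\ovr\le g-5k+9$; this is precisely where the constant $\tfrac15$ emerges. So rather than trying to trap $\ell^\ast$ inside $R$, you should bound $\ovr$ from above on the locus where $\ell^\ast$ escapes $R$.
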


\begin{thm} \label{thm_howgeneral}
In Situation \ref{sit_main}, if $\chr K = 0$ and  $\rho_g(d,r) \geq 0$, then $\dim W^r_d(C) = \rho_g(d,r)$ if and only if $r=0$, $g-d+r = 1$, or $g - k \leq d - 2r$.
\end{thm}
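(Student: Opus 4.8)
The plan is to sandwich $\dim W^r_d(C)$ between two bounds and read off the answer from where they coincide. Theorem \ref{thm_main} gives $\dim W^r_d(C) \le \ovr$, while the classical existence half of Brill--Noether theory (Kleiman--Laksov, Kempf), valid on \emph{every} curve, gives $\dim W^r_d(C) \ge \rho_g(d,r)$ because $\rho_g(d,r) \ge 0$. Writing $s = g-d+r$ for the speciality, I may assume $s \ge 1$, the range in which $W^r_d(C) \subsetneq \Pic^d(C)$ (for $s \le 0$ the variety is all of $\Pic^d(C)$ and falls outside Situation \ref{sit_main}). My first step is then purely numerical: a direct comparison of the defining formula for $\ovr$ with $\rho_g(d,r)$ shows that $\ovr = \rho_g(d,r)$ exactly when $r = 0$ or $r + s \le k$, the latter being a restatement of $g - k \le d - 2r$. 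Wherever this holds the sandwich forces $\dim W^r_d(C) = \rho_g(d,r)$, settling the cases $r=0$ and $g-k \le d-2r$ of the forward implication.

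The remaining listed case, $s = 1$, is genuinely separate, because there $\ovr$ may strictly exceed $\rho_g(d,r)$ while the conclusion still holds, so the sandwich does not suffice. Here I would use Serre duality: the involution $L \mapsto K_C \otimes L^{-1}$ identifies $W^r_d(C)$ with $W^{s-1}_{2g-2-d}(C)$, which for $s = 1$ is $W^0_{2g-2-d}(C)$, the image of the Abel--Jacobi map from $C^{(2g-2-d)}$. Since $d = g+r-1$ forces $2g-2-d = g-r-1 \le g$, this image has dimension $g-r-1 = \rho_g(d,r)$, independently of $k$. Conceptually, this is why $s=1$ is a case of its own: the dual series has rank $0$, so the gonality cannot inflate the dimension.

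For the converse I must show that when none of the three conditions holds, that is, when $r \ge 1$, $s \ge 2$, and $r + s > k$, one has $\dim W^r_d(C) > \rho_g(d,r)$ and hence $\dim W^r_d(C) \ne \rho_g(d,r)$. Here I would invoke the lower bound of Coppens and Martens \cite{cm02} that already underlies Theorem \ref{thm_eq}, but only in its mildest form: the component built from a single copy of the gonality pencil. This single-pencil construction requires the residual series to stay special, i.e. $s \ge 2$, and it produces a family of dimension $g - r(s-1) - k = \rho_g(d,r) + (r+s-k)$, matching the $\ell=1$ term of the formula for $\ovr$; since $r+s-k \ge 1$ under the standing hypotheses, this strictly exceeds $\rho_g(d,r)$. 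Crucially, for the converse I need only beat $\rho_g(d,r)$, not attain the full $\ovr$, so I may apply this single-pencil bound uniformly in $k$, including the intermediate range $5 < k < \frac15 g + 2$ not covered by Theorem \ref{thm_eq}.

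The main obstacle is precisely this last step: confirming that the Coppens--Martens single-pencil component is present and of the claimed dimension for every $k$ in range, and pinning down the threshold $s \ge 2$ exactly. The boundary between $s=1$ and $s \ge 2$ is the heart of the statement, separating the duality-governed regime (where $k$ is invisible) from the gonality-governed regime; I would therefore take care to confirm that the single-pencil family genuinely fails to exist when $s=1$ (consistent with the Serre-duality computation) and genuinely exists when $s \ge 2$. Matching this geometric threshold to the combinatorial threshold $r+s \le k$ for $\ovr = \rho_g(d,r)$, corrected by the exceptional value $s=1$, is what makes the final equivalence come out cleanly.
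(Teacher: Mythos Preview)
Your approach is essentially the paper's: dispose of $r=0$ and $s=g-d+r=1$ by the Abel--Jacobi/Serre-duality argument, use the sandwich $\rho_g(d,r)\le \dim W^r_d(C)\le \ovr$ for the case $g-k\le d-2r$, and for the converse observe that when $r\ge 1$, $s\ge 2$, and $r+s>k$ the $\ell=1$ term of $\unr$ already gives $\dim W^r_d(C)\ge \rho_g(d,r-1)-k=\rho_g(d,r)+(r+s-k)>\rho_g(d,r)$ via Theorem~\ref{thm_cm}.

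One factual slip worth correcting: your claim that for $s=1$ ``$\ovr$ may strictly exceed $\rho_g(d,r)$'' is false. When $s=1$ the index set in Definition~\ref{def_ovr} is $\{0,\dots,r'\}$ with $r'=\min(r,s-1)=0$, so only $\ell=0$ occurs and $\ovr=\rho_g(d,r)$ automatically. Thus the sandwich \emph{does} suffice there; your Serre-duality computation is correct but redundant rather than essential. (The paper also treats $r=0$ and $s=1$ by the symmetric-product argument, so your structure still matches, just with a mistaken justification for why that case is split off.)
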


All three theorems use the following notation and hypotheses.

\begin{sit} \label{sit_main}
Fix nonnegative integers $g,k,r,d$ such that $2 \leq k \leq~\frac{g+3}{2}$ and $g-d+r > 0$. Let $K$ be an algebraically closed field, and $C$ be a general $k$-gonal curve of genus $g$ over $K$. Also assume that 
\begin{itemize}
\item If $k$ is odd, then $\chr K \neq 2$,
\item If $k=4$ or $10$, then $\chr K \neq 3$, and
\item If $k=6$, then $\chr K \neq 5$.
\end{itemize}
\end{sit}

The hypothesis $g-d+r > 0$ is harmless, since $g-d+r \leq 0$ would imply automatically that $W^r_d(C) = \Pic^d(C)$. The peculiar restrictions on the characteristic of $K$ arise in our proof when we must construct a \emph{tame} morphism of metrized complexes with certain properties. The characteristic $0$ assumption in Theorems \ref{thm_eq} and \ref{thm_howgeneral} is included because these make use of constructions from \cite{cm99} and \cite{cm02}, which assume this hypothesis.

\subsection{The estimate $\ovr$} The estimate $\ovr$ we refer to above is the following.

\begin{defn} \label{def_ovr}
Let $r'$ denote the minimum of $r$ and $g-d+r-1$. Define
$$
\ovr = \max_{\ell \in \{0,1,2,\cdots,r'\}} \left( \rho_g(d,r-\ell) - \ell k \right).
$$
\end{defn}

\begin{rem} \label{rem_whatsell}
The expression being maximized in this definition is a quadratic function in $\ell$, which takes its maximum (among all real numbers $\ell$) at $\ell_0 = \frac12(g-d+2r+1-k)$. It is therefore straightforward to write a closed-form expression for $\ovr$; see Remark \ref{rem_threecases} and the equation preceding it.
\end{rem}

Note that, by Riemann-Roch, $W^r_d(C) \cong W^{g-d+r-1}_{2g-2-d}(C)$. This explains the apparently strange appearance of $r'$ in Definition \ref{def_ovr}: it ensures that $\ovr$ is invariant under this duality.

We will relate $\ovr$ to $\dim W^r_d(C)$ by specializing to a metric graph $\Gg$ very similar to the metric graphs used in the tropical proof of the Brill-Noether Theorem \cite{cdpr}, but chosen with \textit{special} edge lengths. Making use of our description in \cite{pfl} of the Brill-Noether loci of such chains, we will see (Corollary \ref{cor_tropdim}) that $\dim W^r_d(\Gg)$ is equal to $\ovr$. The tropical lifting results of \cite{abbr}, combined with semicontinuity results for tropicalization, give Theorem \ref{thm_main}.

\subsection{The lower bound $\unr$} 
To deduce Theorems \ref{thm_eq} and \ref{thm_howgeneral} from Theorem \ref{thm_main}, we require a lower bound on $\dim W^r_d(C)$.

Coppens and Martens provide constructions in \cite{cm99} and \cite{cm02} that establish lower bounds on $\dim W^r_d(C)$ in Situation \ref{sit_main}, with an additional characteristic $0$ hypothesis. Their results imply the existence of irreducible components of $W^r_d(C)$ of dimension $\rho_g(d,r-\ell) - \ell k$ for certain values of $\ell$ in $\{0,1,\cdots,r'\}$. From their results, one can deduce the following lower bound on $\dim W^r_d(C)$, which coincides with the ``optimistic guess'' at the end of \cite{cm99} (they observe in \cite[Remark on p. 39]{cm02} that this optimistic guess is too small in general, though it is correct for $k \leq 5$).

\begin{defn}
Let $r' = \min(r,g-d+r-1)$. If $r' \geq 1$, let
$$
\unr = \max_{\ell \in \{0,1,r'-1,r'\}} \left( \rho_g(d,r-\ell) - \ell k \right).
$$
If $r' = 0$, let $\underline{\rho}_{g,k}(d,r) = \rho_g(d,r)$ (i.e. only allow $\ell = 0$).
\end{defn}

\begin{thm}[\cite{cm02}] \label{thm_cm}
In Situation \ref{sit_main}, if $\chr K = 0$ then 
$\dim W^r_d(C) \geq \unr$.
\end{thm}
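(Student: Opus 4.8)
The plan is to derive Theorem \ref{thm_cm} from the explicit constructions of Coppens and Martens by showing that, for each $\ell \in \{0,1,r'-1,r'\}$, the curve $C$ carries an irreducible component of $W^r_d(C)$ of dimension at least $\rho_g(d,r-\ell) - \ell k$; taking the maximum over these four values of $\ell$ then yields the claimed bound. The source of such components is the following geometric idea: if $C$ admits a linear series of degree $d-\ell k$ and rank $r-\ell$, one can add $\ell$ copies of a fiber of the $g^1_k$ (more precisely, twist by $\ell$ copies of the gonality pencil) to produce a series of degree $d$ and rank $r$. The dimension of the resulting family is governed by $\rho_g(d,r-\ell)$ for the base series, with the $-\ell k$ correction coming from the rigidity of the added $g^1_k$-fibers. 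Thus the main task is to extract from \cite{cm99} and \cite{cm02} the precise statements producing nonempty $W^{r-\ell}_{d-\ell k}(C)$ of the expected dimension and to verify that the twisting construction behaves as dimension-additively as this heuristic suggests.

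\emph{First} I would recall the relevant results of Coppens and Martens. In \cite{cm99} they study the ``optimistic guess,'' and in \cite{cm02} they prove the existence of components realizing the boundary values $\ell \in \{0,1,r'-1,r'\}$ (the extreme and near-extreme twists), which is exactly why only these four values appear in the definition of $\unr$. I would cite the relevant propositions verbatim, being careful that their indexing conventions (degree, rank, and gonality) match Situation \ref{sit_main}, and that their characteristic $0$ hypothesis is in force. \emph{Second}, I would translate each of their existence statements into the assertion that $\dim W^r_d(C) \geq \rho_g(d,r-\ell) - \ell k$ for the corresponding $\ell$. \emph{Third}, invoking the duality $W^r_d(C) \cong W^{g-d+r-1}_{2g-2-d}(C)$ noted after Definition \ref{def_ovr}, I would confirm that the values $\ell = r'-1, r'$ are handled by applying the $\ell = 0, 1$ constructions to the Serre-dual series; this is the role of the symmetric definition $r' = \min(r, g-d+r-1)$, and it ensures the four listed values of $\ell$ really are the ones Coppens and Martens' constructions produce. \emph{Finally}, taking the maximum over these four admissible $\ell$ gives $\dim W^r_d(C) \geq \unr$, and in the degenerate case $r' = 0$ only $\ell = 0$ survives, recovering $\dim W^r_d(C) \geq \rho_g(d,r)$, which is the classical existence statement.

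The main obstacle I anticipate is bookkeeping rather than conceptual: matching the precise hypotheses and numerical conventions of \cite{cm99} and \cite{cm02} to the present normalization, and verifying that their constructions genuinely produce irreducible components of the stated dimension (not merely nonempty loci, and not of strictly larger dimension in a way that would still be consistent with, but not directly yield, the bound). In particular I would need to check that the twisting-by-the-pencil operation is injective enough on the relevant parameter spaces that the dimension count $\rho_g(d,r-\ell) - \ell k$ is actually attained, and that the duality identification is compatible with how Coppens and Martens parametrize their families. Since Theorem \ref{thm_cm} is attributed directly to \cite{cm02}, I expect the bulk of the argument to consist of citing and reconciling their results rather than proving new geometry, so the essential difficulty lies in extracting a clean, self-contained lower bound of the form $\unr$ from constructions that were originally phrased for individual values of $\ell$.
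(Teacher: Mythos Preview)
Your overall strategy---reduce to the main theorem of \cite{cm02} and verify its hypotheses for an appropriate $\ell$---is correct, but two of your concrete steps are wrong, and a third is missing.

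First, the Serre duality argument does not do what you claim. The isomorphism $W^r_d(C) \cong W^{g-d+r-1}_{2g-2-d}(C)$ swaps the roles of $a = r+1$ and $b = g-d+r$, but the quantity $\rho_g(d,r-\ell)-\ell k = g - (a-\ell)(b-\ell) - \ell k$ is \emph{symmetric} in $a$ and $b$, so duality fixes $\ell$ rather than sending $\ell \in \{r'-1,r'\}$ to $\ell \in \{0,1\}$. The actual reason the four values $\ell \in \{0,1,r-1,r\}$ are privileged is the divisibility hypothesis in the Coppens--Martens theorem: they require that $r+1-\ell$ divide either $r$ or $r+1$, and these four values are exactly the ones that satisfy this automatically (since $r+1-\ell \in \{1,2,r,r+1\}$).

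Second, you propose to produce a component for \emph{each} of the four values of $\ell$ and then take the maximum. This fails: the Coppens--Martens theorem also requires $\rho_g(d,r-\ell)-\ell k \geq \max(0,\rho_g(d,r))$, which need not hold for all four values simultaneously. The paper instead selects the single $\ell \in \{0,1,r-1,r\}$ closest to the vertex $\ell_0 = \frac12(g-d+2r-k+1)$ of the quadratic, so that $\rho_g(d,r-\ell)-\ell k = \unr$ by construction; hypothesis (3) then follows from the standing assumption $\unr \geq \max(0,\rho_g(d,r))$ (the complementary case being trivial).

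Third, you do not address the remaining hypothesis $\ell \geq r-k$. The paper handles this by a short case analysis: it is automatic for $\ell \in \{r-1,r\}$ since $k \geq 2$, and for $\ell = 1$ one uses that $\ell_0 < \frac12 r$ together with $d \leq g-1$ to force $r < k-1$.
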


Coppens and Martens do not state Theorem \ref{thm_cm} in the form that we have stated it here, but Theorem \ref{thm_cm} can be rapidly deduced from the main theorem in \cite{cm02}; the argument is provided in Section \ref{sec_proofs}.

\begin{rem} \label{rem_ab}
We will see in our analysis that it is useful to consider instead the quantites $g - \ovr$ $g - \unr$ (which are bounds on the \textit{codimension} of $W^r_d(C)$ in $\Pic^d(C)$). For fixed $k$, both of these quantities are functions of the following two variables.
\begin{eqnarray*}
a &=& r+1\\
b &=& g-d+r
\end{eqnarray*}
Observe that, for \emph{any} smooth curve $C$ of genus $g$, $W^r_d(C) \neq \Pic^d(C)$ if and only if both $a$ and $b$ are positive. It is useful to use these variables, rather than $r$ and $d$, for visualization purposes, such as in Figures \ref{fig_g20} and \ref{fig_disagree}; by the previous remark, the region of interest is the first quadrant. The maximizing value $\ell_0$ mentioned in Remark \ref{rem_whatsell} is easier to understand in these variables as well: it is $\ell_0 = \frac12( a+b-k)$.
\end{rem}

\begin{eg}
Consider the case $g=20$ in characteristic $0$. Since $\frac15 g + 2 = 6$, Theorem \ref{thm_eq} shows that $\dim W^r_d(C) = \overline{\rho}_{20,k}(d,r)$ in all cases. We can visualize how the existence of specific types of linear series varies with the gonality by plotting, for each possible gonality $k$, all points $(r+1,20-d+r)$ such that $W^r_d(C) \neq \emptyset$ for a general $k$-gonal curve of genus $20$. These plots are shown in Figure \ref{fig_g20}. We show only the points with both coordinates positive, since all other points correspond to values of $r,d$ for which $W^r_d(C) = \Pic^d(C)$.

\begin{figure}
\begin{tabular}{ccccc}
\includegraphics{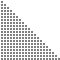}&
\includegraphics{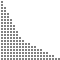}&
\includegraphics{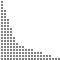}&
\includegraphics{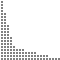}&
\includegraphics{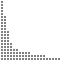}\\
$k=2$ & $k=3$ & $k=4$ & $k=5$ & $k=6$\\
\includegraphics{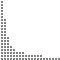}&
\includegraphics{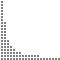}&
\includegraphics{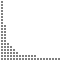}&
\includegraphics{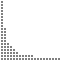}&
\includegraphics{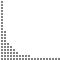}\\
$k=7$ & $k=8$ & $k=9$ & $k=10$ & $k=11$
\end{tabular}
\caption{
These plots show, for genus $20$ and every possible gonality $k$, the points $(g-d+r,r+1)$ such that $W^r_d(C)$ is nonempty for a general $k$-gonal curve $C$ in characteristic $0$.
} \label{fig_g20}
\end{figure}
\end{eg}

We consider the form of the results proved in \cite{cm02}, together with the analysis of the tropical version of the problem, as evidence for the following conjecture. Note in particular that we believe that the hypotheses on the characteristic of the field should be superfluous, although they are needed for technical reasons in our arguments.

\begin{conj}
If $g,k,r,d$ are nonnegative integers with $g-d+r > 0$ and $2 \leq k \leq \frac{g+3}{2}$, and $C$ is a general $k$-gonal curve of genus $g$ over an algebraically closed field, then $\dim W^r_d(C) = \ovr$.
\end{conj}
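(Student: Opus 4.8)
The upper-bound half of the conjecture is already unconditional: Theorem~\ref{thm_main} gives $\dim W^r_d(C) \le \ovr$ in all of Situation~\ref{sit_main}, via the degeneration to $\Gg$, the tropical computation $\dim W^r_d(\Gg) = \ovr$ of Corollary~\ref{cor_tropdim}, and the semicontinuity of tropicalization combined with the lifting results of \cite{abbr}. The entire remaining content of the conjecture is therefore the matching lower bound $\dim W^r_d(C) \ge \ovr$ for \emph{all} $k$, together with the removal of the characteristic restrictions. The plan is to produce, for the maximizing index $\ell_0 = \tfrac12(a+b-k)$ of Remark~\ref{rem_whatsell} (in the variables $a=r+1$, $b=g-d+r$ of Remark~\ref{rem_ab}, rounded to the nearest feasible integer $\ell \in \{0,\dots,r'\}$), an irreducible subvariety of $W^r_d(C)$ of dimension exactly $\rho_g(d,r-\ell)-\ell k = g-(a-\ell)(b-\ell)-\ell k$.

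The natural construction, in the spirit of \cite{cm02}, fixes the gonality pencil $A$ with $|A|=g^1_k$ and forms linear series $L = M + \ell A$ as $M$ ranges over a component of $W^{r-\ell}_{d-\ell k}(C)$ consisting of series in general position relative to $A$, so that $r(L)=r$ exactly and the assignment $M \mapsto L$ is generically injective; the image then realizes the desired dimension. Theorem~\ref{thm_cm} already carries this out for the four boundary values $\ell \in \{0,1,r'-1,r'\}$, which is exactly why $\unr = \ovr$ — and hence the conjecture holds — precisely when the parabola $g-(a-\ell)(b-\ell)-\ell k$ attains its integer maximum at one of those values, equivalently when $k \le 5$ or $k \ge \tfrac15 g + 2$. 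The open regime is the band of intermediate gonalities $5 < k < \tfrac15 g + 2$ (illustrated in Figure~\ref{fig_disagree}), where $\ell_0$ is an interior lattice point and the required component is one the Coppens--Martens constructions do not directly produce.

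The main obstacle is thus to upgrade the one-sided semicontinuity of tropicalization to an equality on the relevant component, and I see two routes. The first is to extend the explicit gonal construction above to interior $\ell$; the delicate point is that $W^{r-\ell}_{d-\ell k}(C)$ is itself a $k$-gonal Brill--Noether variety, whose dimension is governed by $\overline{\rho}_{g,k}(d-\ell k,\, r-\ell)$ rather than by the classical $\rho_g$, so one must break the resulting circularity — perhaps by an induction on $\ell$ or on $g$ — and verify that a general-position component of the expected dimension genuinely exists. The second, and to my mind more promising, route is a dimension-preserving lifting theorem: the tropical locus $W^r_d(\Gg)$ has dimension $\ovr$ by \cite{pfl}, so it would suffice to show that a maximal-dimensional family of tropical linear series lifts to an algebraic family over the generic fiber of the degeneration without any drop in dimension, strengthening the individual lifts of \cite{abbr} to lifts in families with dimension control. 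Either route would also have to be executed in a characteristic-free way to remove the tame-morphism hypotheses of Situation~\ref{sit_main}. I expect the dimension control in the lifting step — not any single calculation — to be the crux, which is precisely why the statement remains conjectural.
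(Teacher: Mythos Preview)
The statement you are addressing is a \emph{Conjecture} in the paper, not a theorem; the paper offers no proof and does not claim one. It is presented as a speculation supported by the form of the upper bound (Theorem~\ref{thm_main}) and the partial lower bounds of Coppens--Martens. So there is no ``paper's own proof'' to compare your proposal against.

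Your write-up is, correspondingly, not a proof but an accurate survey of the situation and a sketch of possible attacks. You correctly isolate what is established unconditionally (the upper bound via $\Gg$ and Corollary~\ref{cor_tropdim}), what is established in characteristic $0$ for the boundary values $\ell \in \{0,1,r'-1,r'\}$ (Theorem~\ref{thm_cm}), and where the genuine gap lies: the interior maximizing values of $\ell$ in the band $5 < k < \tfrac15 g + 2$, together with the characteristic restrictions inherited from the tame-morphism hypotheses in Lemma~\ref{lem_lift}. Your two proposed routes --- extending the Coppens--Martens construction $L = M + \ell A$ to interior $\ell$, or proving a dimension-preserving lifting theorem from $W^r_d(\Gg)$ --- are reasonable research directions, and you are right that the circularity in the first route (the source variety $W^{r-\ell}_{d-\ell k}(C)$ is itself governed by $\overline{\rho}$, not $\rho$) and the lack of dimension control in the second are the real obstacles.

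In short: your proposal is a correct diagnosis, not a proof, and you acknowledge as much in your final sentence. That is the appropriate response to a conjecture; there is nothing further to grade against the paper.
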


Beyond the dimension itself, one can ask about the dimensions of \textit{all} irreducible components of $W^r_d(C)$. The form of the upper bound, and the results of \cite{cm02}, suggest the following question.

\begin{qu}
Does every irreducible component of $W^r_d(C)$ have dimension equal to $\rho_g(d,r-\ell) - \ell k$ for some integer $\ell$?
\end{qu}

\subsection{The gap between $\unr$ and $\ovr$} \label{ss_gap}

The definitions of $\ovr$ and $\unr$, along with Remark \ref{rem_whatsell}, reveal that there are four cases where we can immediately conclude that $\unr = \ovr$ and compute $\dim W^r_d(C)$ (for curves over characteristic $0$ fields). These are the cases where the maximum in Definition \ref{def_ovr} occurs at $\ell = 0,1,r'-1,r'$, respectively (note that these cases may overlap in situations where the maximum occurs for two integers $\ell$, or when $r' \leq 2$). Here $r' = \min(r,g-d+r-1)$, as in Definition \ref{def_ovr}.

The issue of comparing $\unr$ to $\ovr$ therefore is confined to the situation where the maximum in Definition \ref{def_ovr} occurs for a value $\ell$ such that $2 \leq \ell \leq r'-2$. In the variables $(a,b) = (r+1,g-d+r)$, this is equivalent to the inequalities $2 \leq \frac{a+b-k}{2} \leq \min(a-3,b-3)$; equivalently $a+b \geq 4+k$ and $|a-b| \leq k-6$ (where we assume $a,b > 0$, as is implied by Situation \ref{sit_main}). See Figure \ref{fig_disagree}.

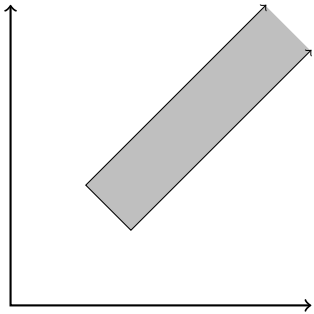
\begin{figure}
\begin{tikzpicture}[scale=0.2]
\draw[<->,thick] (0,20) -- (0,0) -- (20,0);
\draw[<->,fill=lightgray] (17,20) -- (5,8) -- (8,5) -- (20,17);
\end{tikzpicture}
\caption{The region in which $\unr < \ovr$, where the coordinates are $a = r+1$ and $b = g-d+r$. It is bounded by the lines $a+b = 4+k$ and $a-b = \pm (k-6)$, and empty for $k \leq 5$.} \label{fig_disagree}
\end{figure}

Our proof of Theorem \ref{thm_eq} proceeds by showing that as long as $k \geq \frac15 g + 2$ or $k \leq 5$, \textit{all} $d,r$ corresponding to points in this region give $\ovr < 0$. The following example shows that even for smaller $k$, the ambiguous cases are nonetheless quite sparse, even in the worst case.

\begin{eg}
Consider the case $g=1000$. For each possible value of the gonality $k$, we can enumerate all possible pairs of positive integers $r,d$ with $d \leq g-1$ and $\ovr \geq 0$ (we will restrict to $d \leq g-1$, since the other cases are symmetric via Riemann-Roch), and count the proportion of these for which $\unr < \ovr$. The largest such proportion is $0.042$, for gonality $k=40$; in that case there are $13123$ possible pairs where $\ovr \geq 0$, of which $552$ satisfy $\ovr < \unr$. Of those $552$ cases, there are only $69$ cases where $\unr < 0$. In all other cases there is no ambiguity about whether $W^r_d(C)$ is nonempty, but only about its exact dimension.
\end{eg}

\subsection{Outline of the paper}

In Section \ref{sec_lifting}, we define the metric graphs $\Gg$ where we formulate the tropical analog we will study, and prove that these graphs can be lifted to algebraic curves using results of \cite{abbr}. We also cite the necessary statements of our \cite{pfl} that translate the computation of $\dim W^r_d(\Gg)$ into the analysis of certain \textit{displacement tableaux}. This analysis is performed in Section \ref{sec_disp}, which is purely combinatorial. In section \ref{sec_proofs} we deduce the theorems stated in the introduction.

\begin{rem}
An alternative to the tropical approach that we use in Section \ref{sec_lifting} would be to instead consider limit linear series on a chain of elliptic curves, where the two attachment points on each elliptic curve differ by $k$-torsion. Such an approach would reduce to the same analysis of displacement tableaux.
\end{rem}

\section{Lifting metric graphs} \label{sec_lifting}

In this section, we construct the metric graphs which will be the basis of our argument, and prove the necessary lifting results to algebraic curves.

\begin{defn}
Let $\Gamma_{g,k,\ell}$ denote the metric graph formed by connecting $g$ cycles in a chain as in Figure \ref{fig_chain}, with edge lengths as follows.
\begin{itemize}
\item The length of the top (clockwise) edge from $w_i$ to $w_{i+1}$ is $\ell$.
\item The length of the bottom (counterclockwise) edge from $w_i$ to $w_{i+1}$ is $k-\ell$.
\end{itemize}
\end{defn}

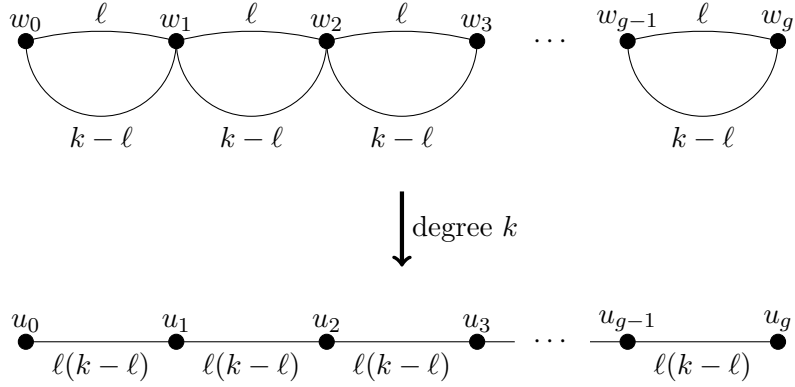
\begin{figure}
\begin{tikzpicture}[scale=2]
\draw (0,0) arc (105:75:1.931);
\draw (1,0) arc (0:-180:0.5);
\draw (1,0) arc (105:75:1.931);
\draw (2,0) arc (0:-180:0.5);
\draw (2,0) arc (105:75:1.931);
\draw (3,0) arc (0:-180:0.5);
\draw (3.5,0) node {$\cdots$};
\draw (4,0) arc (105:75:1.931);
\draw (5,0) arc (0:-180:0.5);

\foreach \x in {0,1,2,4} {
\draw (\x+0.5,0.05) node[above] {$\ell$};
\draw (\x+0.5,-0.5) node[below] {$k - \ell$};
\draw (\x+0.5,-2) node[below] {$\ell(k-\ell)$};
}

\draw[->,ultra thick] (2.5,-1) -- (2.5,-1.5);
\draw (2.5,-1.25) node[right] {degree $k$};
\draw (0,-2) -- (3.25,-2);
\draw[fill] (0,-2) circle[radius=0.05] node[above] {$u_0$};
\draw[fill] (1,-2) circle[radius=0.05] node[above] {$u_1$};
\draw[fill] (2,-2) circle[radius=0.05] node[above] {$u_2$};
\draw[fill] (3,-2) circle[radius=0.05] node[above] {$u_3$};
\draw (3.5,-2) node {$\cdots$};
\draw (3.75,-2) -- (5,-2);
\draw[fill] (4,-2) circle[radius=0.05] node[above] {$u_{g-1}$};
\draw[fill] (5,-2) circle[radius=0.05] node[above] {$u_g$};

\draw[fill] (0,0) circle[radius=0.05] node[above] {$w_0$};
\draw[fill] (1,0) circle[radius=0.05] node[above] {$w_1$};
\draw[fill] (2,0) circle[radius=0.05] node[above] {$w_2$};
\draw[fill] (3,0) circle[radius=0.05] node[above] {$w_3$};
\draw[fill] (4,0) circle[radius=0.05] node[above] {$w_{g-1}$};
\draw[fill] (5,0) circle[radius=0.05] node[above] {$w_g$};
\end{tikzpicture}
\caption{The chain of cycles $\Gamma_{g,k,\ell}$ and the harmonic morphism to an interval.} \label{fig_chain}
\end{figure}

\begin{defn}
Let $k,\ell$ be positive integers with $\ell < k$, and let $p$ be either a prime number or $0$. Call the triple $(p,k,\ell)$ \emph{admissible} if $\gcd(\ell,k) = \gcd(\ell,p) = \gcd(k-\ell,p) = 1$.
\end{defn}

This notion of admissibility underlies the restrictions on the characteristic of the field in Situation \ref{sit_main}.

\begin{lemma} \label{lem_lexists}
Let $p,k$ be integers, with $k \geq 2$ and either $p=0$ or $p$ prime. There exists an integer $\ell \in \{1,2,\cdots,k-1\}$ such that $(p,k,\ell)$ is admissible if and only if 
$$
(p,k) \not\in \{(2,k):\ k \mbox{ odd}\} \cup \{(3,4),(3,10),(5,6)\}.
$$
\end{lemma}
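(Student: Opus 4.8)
The plan is to treat the two directions of the biconditional separately: first verify by hand that no admissible $\ell$ exists for the listed pairs, and then show that an admissible $\ell$ \emph{does} exist in every other case. The ``only if'' direction is a finite verification together with one infinite family. For $(2,k)$ with $k$ odd, the two conditions $2 \nmid \ell$ and $2 \nmid (k-\ell)$ are contradictory, since $\ell$ and $k-\ell$ have opposite parity when $k$ is odd; and for $(3,4)$, $(3,10)$, $(5,6)$ one checks directly that every $\ell \in \{1,\dots,k-1\}$ coprime to $k$ lands in one of the two forbidden residue classes modulo $p$.

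For the ``if'' direction I would first dispose of the easy cases with $\ell=1$ or $\ell=2$. If $p=0$, or $p \mid k$, or $k \not\equiv 1 \pmod p$, then $\ell=1$ is admissible: the only nontrivial requirement is $p \nmid (k-1)$, which is exactly $k \not\equiv 1 \pmod p$ and holds in each of these cases. This reduces us to $k \equiv 1 \pmod p$ with $p$ prime (so automatically $p \nmid k$). Here $p=2$ forces $k$ odd, which is excluded, so $p \ge 3$; and if $k$ is odd then $\ell=2$ works, since $\gcd(2,k)=1$ and $2 \not\equiv 0,1 \pmod p$ for every $p \ge 3$. The only substantive case is therefore $p \ge 3$, $k$ even, and $k \equiv 1 \pmod p$.

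In that case I would count the admissible $\ell$ directly. Write $N$ for the number of admissible $\ell$; the forbidden residues modulo $p$ are $0$ and $k \equiv 1$, so $N = \phi(k) - |B| - |D|$, where $B,D$ are the sets of units $\ell \in \{1,\dots,k-1\}$ with $\ell \equiv 0$ and $\ell \equiv 1 \pmod p$ respectively. The involution $\ell \mapsto k-\ell$ preserves coprimality to $k$ and carries $B$ bijectively onto $D$ (because $k \equiv 1 \pmod p$), while $B \cap D = \emptyset$ since $p \nmid k$; hence $N = \phi(k) - 2|B|$. Setting $M = (k-1)/p \in \ZZ$, the substitution $\ell = pm$ gives $|B| = \#\{m \le M : \gcd(m,k)=1\}$, and the same involution identifies this with the number of units in $[k-M,\,k-1]$. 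Partitioning $\{1,\dots,k-1\}$ accordingly yields the clean formula
$$N = \#\{\, n : M < n < k-M,\ \gcd(n,k)=1 \,\}.$$
Thus an admissible $\ell$ exists precisely when the open interval of length $(p-2)M$ centered at $k/2$ contains an integer coprime to $k$. This gives a useful monotonicity: for fixed $k$, increasing $p$ shrinks $M$ and enlarges the interval, so $N$ is nondecreasing in $p$, and it suffices to test the smallest prime $p \mid (k-1)$.

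Finally I would bound this count. By inclusion--exclusion the number of integers coprime to $k$ in the interval is at least $\tfrac{(p-2)(k-1)}{pk}\,\phi(k) - 2^{\omega(k)}$, where $\omega(k)$ is the number of distinct prime divisors of $k$; since $\tfrac{p-2}{p} \ge \tfrac13$ and $\tfrac{k-1}{k} \ge \tfrac34$ for $k \ge 4$, this exceeds $\tfrac14\phi(k) - 2^{\omega(k)}$, which is positive whenever $\phi(k) > 4 \cdot 2^{\omega(k)}$. Only finitely many even $k$ fail this inequality; I would enumerate them and, using the monotonicity above, test each against only the least admissible prime $p \mid (k-1)$. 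This finite computation returns exactly the three sporadic failures $(3,4)$, $(3,10)$, $(5,6)$. I expect this last step to be the main obstacle: the error term $2^{\omega(k)}$ is too large relative to the main term $\sim \phi(k)/p$ for small $k$, so the asymptotic estimate cannot by itself settle the delicate small cases, and the exceptional pairs must be isolated by explicit verification.
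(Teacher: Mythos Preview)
Your argument is correct. The ``only if'' direction and the reduction via $\ell=1,2$ to the case $p\ge 3$, $k$ even, $k\equiv 1\pmod p$ coincide exactly with the paper's treatment. The divergence is in that remaining case.

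The paper does something much more pedestrian there: it simply writes down an explicit $\ell$ near $k/2$ according to a small table (e.g.\ $\ell=\tfrac{k}{2}-1$ when $k\equiv 0\pmod 4$ and $p\ge 5$, $\ell=\tfrac{k}{2}-2$ when $k\equiv 2\pmod 4$ and $p>5$, with ad hoc shifts for $p=3,5$), and then checks the three conditions $\gcd(\ell,k)=1$, $\ell\not\equiv 0,1\pmod p$, $\ell\ge 1$ directly. The point is that $\tfrac{k}{2}-c$ is automatically coprime to $k$ once $\gcd(c,\tfrac{k}{2})=1$ (hence the $\bmod\ 4$ split), and the congruence condition modulo $p$ is easily arranged since $k\equiv 1$; the failures of $\ell\ge 1$ are exactly the sporadic exceptions. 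This is a two-line verification per cell of a $2\times 3$ grid.

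Your approach instead proves the nice identity $N=\#\{n:M<n<k-M,\ \gcd(n,k)=1\}$ and then bounds it below via inclusion--exclusion. This is correct and has the merit of being conceptual (and of explaining \emph{why} the exceptions are small), but it trades a handful of direct verifications for an inclusion--exclusion estimate plus an unspecified finite search over the even $k$ with $\phi(k)\le 4\cdot 2^{\omega(k)}$; that list is genuinely finite but not tiny (it includes $k=4,6,8,10,12,14,16,18,20,22,24,26,28,34,36,40,48,\ldots$), and each must then be tested against its least prime divisor of $k-1$. So your route is sound but heavier than necessary: the paper sidesteps the enumeration entirely by exhibiting the witness $\ell$ outright.
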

\begin{proof}
Assume that $(p,k)$ are chosen, not belonging to any of these four cases stated in the lemma. Let $\ell$ be an integer, chosen as follows.
\begin{enumerate}
\item If $k \not\equiv 1 \pmod{p}$, let $\ell = 1$ (this case includes the case $p=0$).
\item If $k \equiv 1 \pmod{p}$ and $k$ is odd, let $\ell = 2$.
\item If $k \equiv 1 \pmod{p}$ and $k$ is even, choose $\ell$ according to the following grid. Observe that our assumptions imply, in this case, that $p$ is an odd prime.
\begin{center}
{
\setlength\extrarowheight{5pt}
$\begin{array}{|l |l l l |}\hline
& p=3 & p=5 & p > 5 \\\hline
k \equiv 0 \pmod{4} & \ell = \frac12 k - 3 & \ell = \frac12 k - 1 & \ell = \frac12 k - 1\\
k \equiv 2 \pmod{4} & \ell = \frac12 k - 6 & \ell = \frac12 k - 4 & \ell = \frac12 k - 2\\\hline
\end{array}$
}
\end{center}
\end{enumerate}
In cases (1) and (2), it follows immediately that $(p,k,\ell)$ is admissible (note in case (2) that our assumption has ruled out $p=2$). In case $3$, we must verify that $\ell \not\equiv 0,1 \pmod{p}$, $\gcd(\ell,k) = 1$, and $\ell \geq 1$. This is a routine verification in each of the six sub-cases.

Conversely, it is straightforward to check that no such $\ell$ exists in the four cases mentioned in the statement.
\end{proof}

\begin{lemma} \label{lem_lift}
Let $K$ be a complete algebraically closed non-Archimedean field of characteristic $p$ (possibly $p=0$), with nontrivial valuation and residue field of the same characteristic. Let $\ell$ be a positive integer such that $(p,k,\ell)$ is admissible. There exists a smooth projective curve $C$ over $K$ with the following properties:
\begin{enumerate}
\item The genus of $C$ is $g$;
\item There is a degree $k$ map $C \rightarrow \PP^1$;
\item The minimal skeleton of $C$ is isometric to $\Gg$.
\end{enumerate}
\end{lemma}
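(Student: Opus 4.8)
The plan is to obtain $C$ from the tropical lifting machinery of Amini--Baker--Brugall\'e--Rabinoff \cite{abbr}, by exhibiting a tame finite harmonic morphism of metrized complexes that lifts the degree-$k$ morphism to an interval depicted in Figure \ref{fig_chain}. First I would pin down the underlying harmonic morphism of metric graphs $\phi\colon \Gg \to T$, where $T$ is the segment with vertices $u_0,\dots,u_g$ and each edge $[u_i,u_{i+1}]$ has length $\ell(k-\ell)$. On the $i$-th cycle the top edge (length $\ell$) maps onto $[u_i,u_{i+1}]$ with expansion factor $k-\ell$ and the bottom edge (length $k-\ell$) with expansion factor $\ell$; since the two expansion factors sum to $(k-\ell)+\ell = k$ in each tangent direction at every vertex, $\phi$ is harmonic of global degree $k$ with local degree $k$ at each $w_i$.

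Next I would enrich $\phi$ to a finite harmonic morphism of metrized complexes of $K$-curves by attaching a copy of $\PP^1$ to every vertex $w_i$ and every $u_j$. Since all vertex curves then have genus $0$, the genus of the lift will be $b_1(\Gg)+\sum_i g(C_{w_i}) = g + 0 = g$, as required. The essential datum is, at each vertex, a finite degree-$k$ map $C_{w_i}\to C_{u_j}$ of $\PP^1$'s whose ramification at the nodes corresponding to the incident edges equals the prescribed expansion factors $\ell$ and $k-\ell$. At an interior vertex this asks for a degree-$k$ self-map of $\PP^1$ whose fiber over each of the two node-images has profile $(\ell,k-\ell)$; the explicit rational function
$$
z \;\longmapsto\; \frac{z^\ell (z-1)^{k-\ell}}{(z-c)^{k-\ell}}
$$
realizes exactly this, with fibers $\{0,1\}$ over $0$ and $\{\infty,c\}$ over $\infty$ of the desired ramification orders, and a similar map serves at the two endpoints.

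The crux is to arrange that these local maps are \emph{tame} and assemble into a tame morphism of metrized complexes, and this is precisely where admissibility enters. Tameness forces every expansion factor and every ramification order at a node to be prime to $p$, which is exactly $\gcd(\ell,p)=\gcd(k-\ell,p)=1$; the remaining ramification of each component map (the two residual critical points forced by Riemann--Hurwitz) must also be made prime to $p$, using the freedom in choosing the component maps, or equivalently Riemann's existence theorem for the prime-to-$p$ fundamental group of a punctured $\PP^1$. The condition $\gcd(\ell,k)=1$ is what prevents each component map from factoring through a $d$-th power map for $d=\gcd(\ell,k)$, so that the lifted cover is connected, has degree exactly $k$, and has $\Gg$ (rather than a collapsed quotient) as its minimal skeleton; this is the arithmetic shadow of the exact-order-$k$ torsion condition described in the remark on chains of elliptic curves.

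Finally, having produced a connected tame finite harmonic morphism of metrized complexes over the value group of $K$, I would invoke the lifting theorem of \cite{abbr} to obtain a finite morphism $C\to \PP^1$ of smooth projective curves over $K$ whose source has minimal skeleton isometric to $\Gg$; by construction $C$ has genus $g$ and carries a degree-$k$ map to $\PP^1$, establishing (1)--(3). I expect the main obstacle to be the third step: verifying that the component maps can be chosen with only tame ramification and glue to a globally tame, connected morphism of metrized complexes. This is exactly the point at which the arithmetic hypotheses on $(p,k,\ell)$ are indispensable, and it is why the construction is possible precisely when $(p,k,\ell)$ is admissible, as guaranteed for a suitable $\ell$ by Lemma \ref{lem_lexists}.
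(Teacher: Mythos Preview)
Your strategy matches the paper's: build a degree-$k$ harmonic morphism from $\Gg$ to an interval with edge lengths $\ell(k-\ell)$, enrich both sides to metrized complexes by attaching copies of $\PP^1$ at the vertices with component maps having divisor of shape $(k-\ell)A + \ell B - (k-\ell)C - \ell D$, and then invoke \cite[Proposition 7.15]{abbr}. The paper's proof is essentially what you describe in your first two paragraphs and your final paragraph.

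Where you diverge from the paper is in the third paragraph, and there the extra work is either unnecessary or misdirected. First, the paper verifies tameness simply by observing that the expansion factors $\ell$ and $k-\ell$ are prime to $p$; it does not attempt to control the two residual critical points of each component map, since the tameness condition of \cite[Definition 2.21]{abbr} concerns only the ramification at the marked points corresponding to edges. Second, and more importantly, your explanation of the role of $\gcd(\ell,k)=1$ is off the mark for this lemma. The source metrized complex is already connected, the lifted curve $C$ is automatically connected, and its skeleton is $\Gg$ by construction regardless of $\gcd(\ell,k)$; a factorization of the component maps would not alter any of properties (1)--(3). In the paper, the coprimality of $\ell$ and $k$ is not invoked in the proof of this lemma at all: it enters only \emph{afterward}, where it is used to guarantee that the torsion profile of $\Gg$ consists entirely of $k$'s, so that Lemma~\ref{l_td} applies. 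Your proposal would go through, but you should drop the connectivity discussion and recognize that the hypothesis $\gcd(\ell,k)=1$ belongs to the analysis of $W^r_d(\Gg)$, not to the lifting step.
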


\begin{proof}
We apply the results of \cite{abbr} on lifting harmonic morphisms of metrized complexes to morphisms of algebraic curves.

Let $I$ denote a metric graph consisting of vertices $u_0,u_1,\cdots,u_g$ joined (in that order) into a path, with all edge lengths equal to $\ell(k-\ell)$ (see Figure \ref{fig_chain}).

Define a piecewise-linear map $\pi: \Gg \rightarrow I$ by $\pi(w_i) = u_i$, with $\pi$ linear between these vertices. This map of metric graphs has integer slopes, with expansion factor $k-\ell$ on the top (clockwise) edges $w_i w_{i+1}$, and expansion factor $\ell$ on the bottom (counterclockwise) edges $w_i w_{i+1}$. Since $(p,k,\ell)$ is admissible, all expansion factors are positive integers, none divisible by the characteristic of $K$, such that at any vertex of $\Gg$, the expansion factors along any outward rays mapping to the leftward ray in $I$ add up to $k$ (and likewise for outward rays mapping to the rightward ray in $I$). Therefore $\pi$ is a \textit{finite harmonic} map of metric graphs, of degree $k$ \cite[Definitions 2.4,\ 2.6]{abbr}.

This harmonic map of metric graphs gives rise to harmonic morphism of \textit{metrized complexes} \cite[Definition 2.19]{abbr} as follows: to each vertex $v$ in either $\Gg$ or $I$ we associate a curve $C_v$ isomorphic to $\PP^1$ over the residue field of $K$; for each vertex $w_i$ of $\Gg$ we associate a degree $k$ morphism $C_{w_i} \rightarrow C_{u_i}$ defined by a rational function with divisor $(k-\ell) \cdot A + \ell \cdot B - (k-\ell) \cdot C - \ell \cdot D$, where $A,B,C,D$ denote any four rational points of $C_{w_i}$. We identify the two points $A,B$ of $C_{w_i}$ with the two tangent directions from $w_i$ that point to the left (if they exist), and we identify with $C,D$ the tangent directions that points to the right (if they exist). We identify the rational point $(0)$ of $C_{u_i}$ with the leftward tangent direction in $I$, and the rational point $(\infty)$ with the rightward tangent direction.

The morphism $\pi'$ is \textit{tame} \cite[Definition 2.21]{abbr}, since none of the expansion factors of $\pi$ are divisible by the characteristic of the residue field of $K$. Therefore \cite[Proposition 7.15]{abbr} implies the existence of a degree $k$ morphism $C \rightarrow \PP^1$ of smooth projective curves over $K$ specializing to the morphism $\pi'$ of metrized complexes. In particular, the minimal skeleton of $C$ is isometric to $\Gg$, and $C$ has a degree $k$ map to $\PP^1$, as desired.
\end{proof}

Lemma \ref{lem_lift} makes it possible to deduce results about algebraic curves from the tropical Brill-Noether theory of the graph $\Gg$, i.e. of the loci $W^r_d(\Gg)$, which can be defined in terms of the Baker-Norine rank of divisors on metric graphs (see \cite{gk} or \cite{mz}). Our \cite{pfl} develops the tools necessary for this analysis, applicable to any chain of cycles with any edge lengths. For convenience, we summarize the needed facts in the context necessary for this paper.

The structure of the tropical Brill-Noether loci $W^r_d(\Gg)$ depends on the \textit{torsion profile} of the metric graph $\Gg$, which is defined as the tuple $m_2, m_3, \cdots, m_{g-1}$, where $m_i$ is the generator of the arithmetic progression $\{m \in \ZZ: m \cdot w_{i-1} \sim m \cdot w_i \}$ \cite[Definition 1.9]{pfl} (in \cite{pfl}, the torsion profile also includes a final number $m_g$, but this number is irrelevant in the context of this paper). For the metric graph $\Gg$, where $\ell$ is chosen relatively prime to $k$ (which is part of the definition of $(p,k,\ell)$ being admissible), all of the numbers $m_i$ are equal to $k$. We therefore make the following definition, which is a simplified version of \cite[Definition 2.1]{pfl} for the case where all of the $m_i$ equal the same number $k$.

\begin{defn} \label{def_disp}
Let $a,b$ be positive integers, and let $\lambda$ be the set $\{1,2,\cdots,b\} \times \{1,2,\cdots,a\}$. A \emph{$k$-uniform displacement tableau} on $\lambda$ is a function $t: \lambda \rightarrow \ZZ_{>0}$ subject to the following two conditions.
\begin{enumerate}
\item $t(x+1,y) > t(x,y)$ and $t(x,y+1) > t(x,y)$ whenever both sides of the inequality are defined.
\item If $t(x,y) = t(x',y')$, then $x-y \equiv x'-y' \pmod{k}$.
\end{enumerate}
The elements of $\lambda$ will be called ``boxes'' and the integer $t(x,y)$ will be called the ``label'' of the box $(x,y)$.
\end{defn}

\begin{eg}
The following is a $3$-uniform displacement tableau, with $a = b = 3$. The labels $3$ and $5$ are repeated, which is permitted since the occurrences are in boxes of lattice distance $k=3$ from each other.
$$
\young(567,345,123)
$$
\end{eg}

\begin{lemma} \label{l_td}
Fix integers $r,d$, and let 
$$
\lambda = \{1,2,\cdots,g-d+r\} \times \{1,2,\cdots,r+1\}.$$
Suppose that $\ell$ is relatively prime to $k$. The locus $W^r_d(\Gg)$ is nonempty if and only if there exists a $k$-uniform displacement tableau on $\lambda$ with at most $g$ distinct labels. If it is nonempty, then its dimension is equal to $g - | t(\lambda) |$, where $t$ has the fewest possible number of distinct labels in such a tableau.
\end{lemma}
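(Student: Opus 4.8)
The plan is to obtain this lemma as a specialization of the general analysis of Brill--Noether loci on chains of cycles carried out in \cite{pfl}. That work describes $W^r_d(\Gamma)$, for an arbitrary chain of $g$ cycles $\Gamma$, in terms of the torsion profile $(m_2,\ldots,m_{g-1})$ alone: it stratifies $W^r_d(\Gamma)$ according to displacement tableaux in the sense of \cite[Definition 2.1]{pfl}, and computes $\dim W^r_d(\Gamma)$ as $g$ minus the least possible number of distinct labels occurring in such a tableau on the rectangle $\lambda$. Granting this, the only things that remain to be checked for the graph $\Gg$ are that its torsion profile is uniformly equal to $k$, so that the general tableau condition collapses to the congruence $x - y \equiv x' - y' \pmod k$ of Definition \ref{def_disp}, and that the resulting dimension statement is read off correctly.

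The torsion computation is the one genuine input, and it is already recorded in the text preceding Definition \ref{def_disp}; I would justify it as follows. For a chain of cycles the degree-zero Picard group decomposes as a product $\Pic^0(\Gg) \cong \prod_{i=1}^g \Pic^0(C_i)$ over the individual cycles $C_i$, with $\Pic^0(C_i) \cong \mathbb{R}/k\mathbb{Z}$ since each cycle has circumference $\ell + (k - \ell) = k$. Under this identification the class $[w_{i-1}] - [w_i]$, which is supported on the single cycle $C_i$, lands in the $i$-th factor as the top-arc length $\ell$ and trivially in all others. Its order $m_i$ is thus the least positive $m$ with $m\ell \equiv 0 \pmod k$, namely $k/\gcd(\ell,k)$, which equals $k$ because $\ell$ is relatively prime to $k$. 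Hence every $m_i$ equals $k$ and the general displacement-tableau condition of \cite[Definition 2.1]{pfl} becomes exactly Definition \ref{def_disp}.

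With this in hand, both assertions follow by transcription from \cite{pfl}. The rectangle $\lambda = \{1,\ldots,g-d+r\} \times \{1,\ldots,r+1\}$ has side lengths $g-d+r$ and $r+1$, the two quantities governing $W^r_d$, and the dimension formula of \cite{pfl} reads $\dim W^r_d(\Gg) = g - |t(\lambda)|$ for a tableau $t$ minimizing the number $|t(\lambda)|$ of distinct labels, under the convention that a negative value signifies $W^r_d(\Gg) = \emptyset$. Consequently $W^r_d(\Gg)$ is nonempty exactly when the minimal number of distinct labels is at most $g$, i.e. when some $k$-uniform displacement tableau on $\lambda$ uses at most $g$ labels, which is the stated nonemptiness criterion.

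Because the substantive content --- the stratification of $W^r_d(\Gamma)$ by displacement tableaux and the identification of each stratum's dimension with $g$ minus its number of distinct labels --- is established in \cite{pfl} and assumed here, there is no serious obstacle within the present argument. The only point demanding care is the torsion computation, and specifically the verification that the order of $[w_{i-1}] - [w_i]$ should be measured on the full chain rather than on an isolated cycle; the product decomposition of $\Pic^0(\Gg)$ settles this, since under it the order is determined by the $i$-th factor alone.
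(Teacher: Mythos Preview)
Your proposal is correct and follows essentially the same approach as the paper: both reduce to citing the relevant result from \cite{pfl} (specifically Corollary~3.7 there) after observing that the torsion profile of $\Gg$ is uniformly equal to $k$. The only differences are cosmetic: the paper's proof adds the minor observation that one may relabel via an order-preserving map so that the labels lie in $\{1,\ldots,g\}$, while you supply more detail on the torsion computation (which the paper records in the text preceding the lemma rather than in the proof itself).
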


\begin{proof}
If such a tableau exists, then we may assume without loss of generality that the labels used are all chosen from $\{1,2,\cdots, g\}$, by applying an order-preserving function to the labels used. The result is now \cite[Corollary 3.7]{pfl}, applied to the case where $\lambda$ is rectangular and all the torsion orders are equal to $k$.
\end{proof}

\begin{cor} \label{cor_punchline}
Let $C$ be a curve of the form guaranteed to exist by Lemma \ref{lem_lift}. Then the dimension of $W^r_d(C)$ is bounded above by the maximum number of omitted labels from $\{1,2,\cdots,g\}$ in a $k$-uniform displacement tableau on $\{1,2,\cdots,g-d+r\} \times \{ 1,2,\cdots, r+1 \}$.
\end{cor}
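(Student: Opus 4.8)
The plan is to combine the lifting result with the tropical dimension formula and pass to the complementary count of labels. Concretely, Corollary \ref{cor_punchline} is a near-immediate consequence of Lemma \ref{lem_lift} together with Lemma \ref{l_td}, once one tracks how an upper bound on dimension becomes a count of \emph{omitted} labels. First I would fix a field $K$ as in Lemma \ref{lem_lift} (a complete algebraically closed non-Archimedean field of the given characteristic, admissible triple $(p,k,\ell)$ in hand thanks to Lemma \ref{lem_lexists}), and let $C$ be the curve it produces, whose minimal skeleton is isometric to $\Gg$ and which carries a degree $k$ map to $\PP^1$. The only analytic input needed here is the standard semicontinuity of rank under tropicalization: the Baker--Norine rank of the specialization of a divisor is at least the rank of the divisor on $C$, so $\dim W^r_d(C) \leq \dim W^r_d(\Gg)$. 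I would cite this (via Baker's specialization inequality, and the semicontinuity/tropicalization results referenced in the surrounding text) rather than reprove it.

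Next I would apply Lemma \ref{l_td} with the rectangular partition $\lambda = \{1,\dots,g-d+r\} \times \{1,\dots,r+1\}$. Since $\ell$ is relatively prime to $k$ by admissibility, the hypothesis of Lemma \ref{l_td} is met, and the lemma gives that $W^r_d(\Gg)$ is nonempty precisely when a $k$-uniform displacement tableau on $\lambda$ exists using at most $g$ distinct labels, and that in that case
$$
\dim W^r_d(\Gg) = g - \min_t |t(\lambda)|,
$$
the minimum over all such tableaux of the number of distinct labels.

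The final translation step is purely bookkeeping. A tableau $t$ on $\lambda$ whose labels lie in $\{1,2,\dots,g\}$ uses $|t(\lambda)|$ distinct labels, hence omits exactly $g - |t(\lambda)|$ of the labels in $\{1,\dots,g\}$. Minimizing $|t(\lambda)|$ is therefore the same as maximizing the number of omitted labels, so
$$
g - \min_t |t(\lambda)| = \max_t \bigl( g - |t(\lambda)| \bigr),
$$
where the right-hand maximum is exactly the maximum number of omitted labels from $\{1,\dots,g\}$ over all $k$-uniform displacement tableaux on $\lambda$. Chaining the inequality $\dim W^r_d(C) \leq \dim W^r_d(\Gg)$ with the equality from Lemma \ref{l_td} and this reformulation yields the stated bound. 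I should also address the degenerate possibility that no tableau with at most $g$ labels exists: in that case $W^r_d(\Gg) = \emptyset$, so $\dim W^r_d(C) \leq \dim W^r_d(\Gg) = -\infty$ (equivalently $W^r_d(C)$ is empty), which is consistent with the convention that the bound is negative, and I would note this briefly.

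The only genuine subtlety, and the step I expect to require the most care, is the semicontinuity comparison $\dim W^r_d(C) \leq \dim W^r_d(\Gg)$: I must be sure I am invoking it in the form that bounds the \emph{dimension} of the algebraic Brill--Noether locus by the dimension of its tropical counterpart, not merely comparing ranks of individual divisor classes. This requires the tropicalization map $\Pic^d(C) \to \Pic^d(\Gg)$ to be surjective with the expected fiber dimensions and for rank to be upper semicontinuous along it; these are exactly the inputs the paper attributes to \cite{abbr} and the cited tropicalization results, so I would assemble the argument by quoting them precisely rather than reconstructing the geometry. Everything after that comparison is formal.
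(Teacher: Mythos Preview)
Your proposal is correct and follows essentially the same route as the paper: compare $\dim W^r_d(C)$ to $\dim W^r_d(\Gg)$, then invoke Lemma~\ref{l_td} and rephrase ``minimum number of distinct labels'' as ``maximum number of omitted labels.'' The only difference is packaging: where you assemble the dimension comparison $\dim W^r_d(C) \leq \dim W^r_d(\Gg)$ from Baker's specialization inequality plus surjectivity and fiber-dimension control of the tropicalization map, the paper simply cites \cite[Proposition~5.1]{pfl} as a black box for exactly this inequality (and there is no need to re-invoke Lemmas~\ref{lem_lexists} or~\ref{lem_lift}, since the hypothesis already hands you $C$).
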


\begin{proof}
This follows from \cite[Proposition 5.1]{pfl} applied to the case of a rectangular partition, which states that the dimensions of the Brill-Noether loci of $C$ are bounded above by the dimensions of the Brill-Noether loci of $\Gg$.
\end{proof}

We have therefore reduced our task to the analysis of $k$-uniform displacement tableaux, which we undertake in the following section.

\section{Uniform displacement tableaux} \label{sec_disp}

Our purpose in this section is to compute the minimum number of symbols needed to form a $k$-uniform displacement tableau on a rectangular partition. The answer is given by the following function.

\begin{defn} \label{def_cd}
Let $a,b,k$ be positive integers with $k \geq 2$. Let $\cd(a,b,k)$ denote the fewest number of distinct symbols used in a $k$-uniform displacement tableau on an $a \times b$ rectangular partition. Define $\delta(a,b,k)$ by
$$
\delta(a,b,k) = \min_{\ell \in \{0,1,\cdots,\min(a,b)-1\}} \left( (a-\ell)(b-\ell) + k \ell \right).
$$
\end{defn}

The name $\cd$ is chosen since this quantity will be used to bound the codimensions of Brill-Noether loci. Indeed, note that by definition, $$\ovr = g - \delta(r+1,g-d+r,k).$$

Observe that $\cd(a,b,k) = \cd(b,a,k)$ and $\delta(a,b,k) = \delta(b,a,k)$. Therefore it suffices to consider the case $a \leq b$, which will simplify some statements.

\begin{rem} \label{rem_threecases}
The function being minimized in the definition of $\delta(a,b,k)$ is a quadratic function in $\ell$, which achieves its minimum (for real values of $\ell$) at $\ell = \frac12 (a+b-k)$. If $\ell$ is constrained to integer values, the minimum is attained by both the floor or the ceiling of $\frac12 (a+b-k)$. An elementary calculation shows that if $a \leq b$, then
$$
\delta(a,b,k) = \begin{cases}
(k-1)(a-1) + b & \mbox{if $k \leq b-a+3$},\\
ab - \left\lfloor \left( \frac{a+b-k}{2} \right)^2 \right\rfloor & \mbox{if $b-a+1 \leq k \leq a+b+1$},\\
ab & \mbox{if $k \geq a+b-1.$}
\end{cases}
$$
The overlap of the hypotheses for these cases reflects the fact that there are two minimizing values of $\ell$ when $a+b-k$ is odd. If $a \geq b$, we obtain similar formulas by symmetry.
\end{rem}

\begin{prop} \label{p_cd}
For any positive integers $a,b,k$ with $k \geq 2$, $$\cd(a,b,k) = \delta(a,b,k).$$
\end{prop}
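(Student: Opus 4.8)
The plan is to prove the two inequalities $\cd(a,b,k)\le\delta(a,b,k)$ and $\cd(a,b,k)\ge\delta(a,b,k)$ separately; by the symmetry $\cd(a,b,k)=\cd(b,a,k)$ noted above I assume throughout that $a\le b$. Everything rests on translating the two conditions of Definition \ref{def_disp} into a single \emph{conflict criterion}: if two distinct boxes satisfy $t(x,y)=t(x',y')$, then they must be incomparable in the coordinatewise partial order, since otherwise a saturated chain joining two comparable boxes would have strictly increasing labels by condition (1); and they must satisfy $x-y\equiv x'-y'\pmod k$ by condition (2). Contrapositively, any two boxes that are \emph{comparable}, or whose contents $x-y$ are \emph{incongruent} modulo $k$, are forced to carry distinct labels. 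Thus a label class is an antichain contained in a single residue class of the content mod $k$, and one checks that two such boxes lie at $\ell^1$-distance a positive multiple of $k$, the content increasing by exactly $k$ at each minimal down-and-right reuse step. Minimizing the number of symbols is then a question of how aggressively a single label can be reused.

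\emph{Lower bound.} By the conflict criterion, distinct residue classes of the content mod $k$ use disjoint sets of labels, and within a single residue class any \emph{chain} (totally ordered set) of boxes carries strictly increasing, hence pairwise distinct, labels. Consequently, if for each class $c\in\{0,\dots,k-1\}$ I select a chain $\chi_c$ of boxes of content $\equiv c$, the union $\bigcup_c\chi_c$ has all labels distinct, so $\cd(a,b,k)\ge\sum_c|\chi_c|$. It then suffices to exhibit chains whose lengths total $\delta(a,b,k)$. Taking $\ell=\lfloor\tfrac12(a+b-k)\rfloor$ as in Remark \ref{rem_threecases}, I would build these chains explicitly as staircases hugging the boundary of the rectangle (running along a full edge and then climbing while collecting every box whose content lies in the prescribed class), and verify by a direct count that their total size is exactly $(a-\ell)(b-\ell)+k\ell=\delta(a,b,k)$.

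\emph{Upper bound.} For the same $\ell$ I would construct a displacement tableau using exactly $\delta(a,b,k)$ labels. Using the identity $(a-\ell)(b-\ell)+k\ell=ab-\ell(a+b-k-\ell)$, the target is to start from the assignment giving all $ab$ boxes distinct labels and then perform $\ell(a+b-k-\ell)$ label identifications, each merging two incomparable boxes whose contents differ by a multiple of $k$ (the reuse steps above). Concretely I would exhibit a closed formula for $t(x,y)$: an inner $(a-\ell)\times(b-\ell)$ rectangle receives distinct labels, while the complementary L-shaped border is filled periodically from a palette of $k\ell$ further symbols, arranged so that each reused symbol appears only on an antichain within one content class. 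One then verifies directly that conditions (1) and (2) hold and that precisely $\delta(a,b,k)$ symbols occur, which together with the lower bound closes the argument.

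\emph{Main obstacle.} The difficulty is bookkeeping rather than conceptual. First, both the lower-bound chains and the upper-bound tableau must be shown to realize the \emph{single} closed-form value $\delta(a,b,k)$, which is piecewise-defined with a floor term when $a+b-k$ is odd; the verification therefore splits into the regimes $k\le b-a+3$, $b-a+1\le k\le a+b+1$, and $k\ge a+b-1$ of Remark \ref{rem_threecases}, each with the two parities of $a+b-k$ treated separately, and one must confirm that the minimizing $\ell$ lies in the admissible range $\{0,\dots,a-1\}$. Second, on the upper-bound side the subtle point is that the local reuse pattern must assemble into a \emph{globally} increasing integer labeling; equivalently, the order induced on the label classes must be acyclic. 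I expect to handle this by giving the explicit formula for $t$ directly, so that conditions (1) and (2) can be checked boxwise and no separate gluing step is needed, but pinning down that formula and confirming it is both valid and minimal is where the real work lies.
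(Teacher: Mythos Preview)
Your overall two-inequality strategy and your lower-bound argument match the paper's. The paper proves $\cd\ge\delta$ (Lemma~\ref{l_cdlower}) by exhibiting, in each of the three regimes of Remark~\ref{rem_threecases}, a set $S\subseteq\lambda$ of size $\delta(a,b,k)$ in which every pair of boxes is either comparable or has incongruent content modulo $k$; these sets $S$ are precisely unions of one chain per residue class of the content, so your formulation is equivalent. Concretely, the paper takes $S$ to be a diagonal band of content-width $k$ in the middle regime, a staircase together with the top row when $k\le b-a+2$, and all of $\lambda$ when $k\ge a+b-1$.

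There is, however, a genuine gap in your upper-bound sketch. The scheme ``inner $(a-\ell)\times(b-\ell)$ rectangle with distinct labels, surrounded by an L-shaped border filled from $k\ell$ \emph{further} symbols'' cannot work in general. Already for $a=b=k=3$ (so $\delta=7$, $\ell=1$), put the $2\times2$ inner block in any corner: the five border boxes must then carry exactly three labels disjoint from the inner four, and monotonicity along the border's row and column forces the labels uniquely; one checks that the two resulting repetitions pair boxes of contents $2$ and $-2\equiv1\pmod 3$, violating condition~(2). The paper's construction (Lemma~\ref{l_cdupper}) is structurally different: it partitions $\lambda$ into vertical strips of height $a-\ell$, fills each strip with $a-\ell$ consecutive integers, and offsets the starting value by $a-\ell$ between horizontally adjacent strips and by $(a-\ell)(k+\ell-a)$ between vertically adjacent rows of strips, giving
\[
t(x,y)=(a-\ell)\bigl[(x-1)+q(k+\ell-a)\bigr]+r,\qquad q=\Bigl\lfloor\tfrac{y-1}{a-\ell}\Bigr\rfloor,\quad r=y-q(a-\ell).
\]
The labels interleave throughout the rectangle rather than separating into rectangle and border palettes. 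Your instinct that ``pinning down the formula is where the real work lies'' is correct; the decomposition you propose is the wrong ansatz for it.
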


We prove Proposition \ref{p_cd} by proving inequalities in both directions. In fact, only the first inequality is needed for this paper's main results, but we present the second inequality to emphasize that our result is the strongest possible using this method.

\begin{lemma} \label{l_cdlower}
For any positive integers $a,b,k$ with $k \geq 2$, $$\cd(a,b,k) \geq \delta(a,b,k).$$
\end{lemma}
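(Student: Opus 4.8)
The plan is to show that any $k$-uniform displacement tableau $t$ on an $a \times b$ rectangle must use at least $\delta(a,b,k)$ distinct labels, by finding, inside the rectangle, a large collection of boxes that are forced to carry pairwise-distinct labels. The constraint forcing distinctness is condition (2) of Definition \ref{def_disp}: two boxes can share a label only if their coordinates agree modulo $k$ along the diagonal direction $x - y$. Condition (1) (strict increase along rows and columns) then prevents \emph{any} repetition within a single residue class that is too ``short.'' Concretely, I would partition the boxes of $\lambda$ according to the value of $(x-y) \bmod k$, and within each such diagonal class bound how much label-sharing is possible.

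The key computation is the following. Fix a residue class $c \in \ZZ/k\ZZ$ and consider the boxes $(x,y)$ with $x - y \equiv c \pmod k$. Two such boxes can only share a label if they lie on a common ``staircase'' along which both coordinates strictly increase, and any two boxes sharing a label must differ by a lattice vector whose diagonal offset is a nonzero multiple of $k$; since each label can be reused only along an increasing chain, the number of boxes collapsing onto a single label is controlled by how far one can travel in the strictly-increasing direction while staying in the rectangle and shifting $x-y$ by multiples of $k$. I expect the clean way to organize this is to count, for the whole rectangle, the maximum possible number of \emph{coincidences} (pairs of boxes forced to have equal labels cannot be too many), which is equivalent to bounding the total label savings $ab - |t(\lambda)|$ from above. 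The natural upper bound for the savings is $\max_{\ell}\bigl(ab - (a-\ell)(b-\ell) - k\ell\bigr)$ taken over admissible $\ell$, which rearranges exactly into $|t(\lambda)| \geq \min_\ell\bigl((a-\ell)(b-\ell) + k\ell\bigr) = \delta(a,b,k)$.

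To make the counting rigorous I would introduce the parameter $\ell$ geometrically: think of $\ell$ as the number of diagonals (in the $x-y$ direction) along which labels are permitted to repeat, or equivalently as measuring an ``inner rectangle'' of dimensions $(a-\ell)\times(b-\ell)$ whose boxes must all receive fresh labels, together with the $\ell$ diagonal bands where repetition saves at most $k$ labels each. The identity $(a-\ell)(b-\ell)+k\ell$ is precisely the count of (forced-distinct inner boxes) plus (the minimum new labels contributed by the $\ell$ repeating bands), so proving $\cd \geq \delta$ amounts to verifying that no tableau can beat this bookkeeping for \emph{any} single choice of $\ell$, hence it must beat the minimum over $\ell$. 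Since $\delta$ is the minimum, it suffices to exhibit one value of $\ell$ — the one realized by the actual label-sharing pattern of $t$ — for which $|t(\lambda)| \geq (a-\ell)(b-\ell)+k\ell$.

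The main obstacle will be the combinatorial heart of the third step: precisely quantifying how label-sharing across a given residue class translates into the ``$(a-\ell)(b-\ell)+k\ell$'' ledger, and in particular verifying that allowing repetitions along $\ell$ diagonals can save at most $k$ labels per diagonal \emph{net}, after accounting for the strict-increase condition which prevents the savings from compounding. I would isolate this as a self-contained claim about increasing chains in a rectangle under the $\bmod\, k$ diagonal constraint, prove it by a careful injection or a double-counting argument (mapping each ``saved'' label to a distinct box of the inner rectangle it displaces), and then feed the resulting inequality into the optimization over $\ell$ to conclude $\cd(a,b,k) \geq \delta(a,b,k)$.
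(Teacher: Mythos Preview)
Your proposal is a plan rather than a proof, and the plan has a real gap at exactly the point you flag as ``the main obstacle.'' The difficulty is not merely that the combinatorics is unfinished; it is that your geometric interpretation of the parameter $\ell$ does not match the formula $(a-\ell)(b-\ell)+k\ell$. You describe $\ell$ as ``the number of diagonals along which labels are permitted to repeat,'' with each such diagonal saving ``at most $k$ labels net.'' But the savings $ab-(a-\ell)(b-\ell)-k\ell$ equals $\ell(a+b-k-\ell)$, not $k\ell$, and there is no evident way to read $\ell$ off from the repetition pattern of a given tableau $t$ so that the inequality $|t(\lambda)|\geq (a-\ell)(b-\ell)+k\ell$ becomes provable by an injection of the kind you sketch. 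Without a concrete definition of $\ell$ in terms of $t$, the sentence ``exhibit one value of $\ell$ --- the one realized by the actual label-sharing pattern of $t$'' has no content, and the double-counting you allude to cannot get started.

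The paper's argument sidesteps this entirely. Rather than bounding savings, it constructs (in three cases depending on the size of $k$ relative to $a$ and $b$) an explicit subset $S\subseteq\lambda$ of size exactly $\delta(a,b,k)$ on which \emph{every} $k$-uniform displacement tableau is forced to be injective. The set $S$ is essentially a diagonal band of width $k$: one chooses an interval of $k$ consecutive values for $x-y$ and takes all boxes with $x-y$ in that interval (adjusted in Case~2 when the rectangle is too thin). Within such a band any two boxes with $x-y\equiv x'-y'\pmod k$ actually satisfy $x-y=x'-y'$, so one dominates the other and they cannot share a label. The parameter $\ell$ then enters only \emph{after} the fact, when one computes $|S|$ and recognizes the answer as the minimum of $(a-\ell)(b-\ell)+k\ell$; it corresponds to the side length of the triangular corners removed from $\lambda$ to form $S$, not to a count of ``repeating diagonals.'' This is both simpler and avoids having to extract $\ell$ from an arbitrary tableau.
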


\begin{proof} 
We assume without loss of generality that $a \leq b$.

Let $\lambda$ denote the rectangular partition $\{1,2,\cdots,b\} \times \{1,2,\cdots,a\}$, and let $t: \lambda \rightarrow \ZZ_{>0}$ be any $k$-uniform displacement tableau on $\lambda$. 
We consider three cases separately (corresponding to the three cases of Remark \ref{rem_threecases}). In each case, we will give a subset $S$ of the boxes of $\lambda$, with $\delta(a,b,k)$ elements, such that each box must have a different value in $t$. This will imply the statement of the lemma.

Throughout, we will say that a box $(x',y')$ \emph{dominates} another box $(x,y)$ if $x' \geq x$ and $y' \geq y$. If so, then $(x',y')$ and $(x,y)$ cannot have the same label.

\textit{Case 1:} $k \geq a+b-1$. For any box $(x,y) \in \lambda$, $1-a \leq x-y \leq b-1$. Therefore, if $(x,y),(x',y')$ are two boxes with $x-y \equiv x'-y' \pmod{k}$, then $x-y = x'-y'$ and one box dominates the other. Therefore we take $S$ to be all of $\lambda$; all boxes must have distinct labels, so $\cd(a,b,k) = \delta(a,b,k) = ab$.

\textit{Case 2:} $k \leq b-a+2$. Define $S \subseteq \lambda$ to be the disjoint union $S_1 \cup S_2$, where
\begin{eqnarray*}
S_1 &=& \left\{ (x,y):\ 1 \leq y \leq a-1 \textrm{ and } y \leq x \leq y+k-1 \right\} \\
S_2 &=&  \left\{ (x,a):\ a \leq x \leq b \right\}.
\end{eqnarray*}
An example is shown in Figure \ref{fig_s}. Any box $(x,y)$ in $S_1$ satisfies $0 \leq x-y \leq k-1$, hence no two boxes in $S_1$ can have the same label, by the same reasoning as in case $1$. Any two boxes in $S_2$ must have different labels since one dominates the other. Finally, if $(x,y) \in S_1$ and $(x',a) \in S_2$ satisfy $x-y \equiv x'-a \pmod{k}$, then $(x',a)$ either dominates or is equal to $(x-y+a, a)$ (since $x' \geq a$, $x' \equiv x-y + a \pmod{k}$, and $0 \leq x-y \leq k-1$), which dominates $(x,y)$. Therefore no box of $S_1$ can have the same label as a box of $S_2$. It follows that the boxes of $S$ all have distinct labels in $t$. The size of $S$ is $k(a-1) + b-a+1 = (k-1)(a-1) + b = \delta(a,b,k)$, so $\cd(a,b,k) \geq \delta(a,b,k)$.

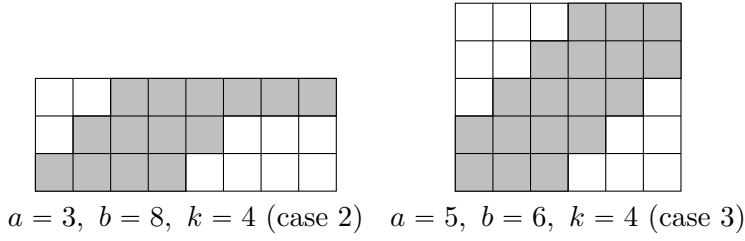
\begin{figure}
\begin{tabular}{cc}
\begin{tikzpicture}[scale=0.5]
\draw (0,0) rectangle (8,3);
\draw[fill=lightgray] (0,0) -- (4,0) -- (4,1) -- (5,1) -- (5,2) -- (8,2) -- (8,3) -- (2,3) -- (2,2) -- (1,2) -- (1,1) -- (0,1) -- cycle;
\foreach \x in {1,2,...,7} {\draw (\x,0) -- (\x,3);}
\foreach \y in {1,2} {\draw (0,\y) -- (8,\y);}
\end{tikzpicture}
&
\begin{tikzpicture}[scale=0.5]
\draw (0,0) rectangle (6,5);
\draw[fill=lightgray] (0,0) -- (3,0) -- (3,1) -- (4,1) -- (4,2) -- (5,2) -- (5,3) -- (6,3) -- (6,5) -- (3,5) -- (3,4) -- (2,4) -- (2,3) -- (1,3) -- (1,2) -- (0,2) -- cycle;
\foreach \x in {1,2,...,5} {\draw (\x,0) -- (\x,5);}
\foreach \y in {1,2,3,4} {\draw (0,\y) -- (6,\y);}
\end{tikzpicture}
\\
$a=3,\ b=8,\ k=4$ (case 2)
& 
$a=5,\ b=6,\ k=4$ (case 3)
\end{tabular}
\caption{Examples of the regions $S$ is cases $2$ and $3$ of the proof of Lemma \ref{l_cdlower}.} \label{fig_s}
\end{figure}

\textit{Case 3:} $b-a+3 \leq k \leq a+b-2$. Define
$$
S = \left\{ (x,y) \in \lambda:\ - \left\lceil \frac{k-1-(b-a)}{2} \right\rceil \leq x-y \leq \left\lfloor \frac{k-1+(b-a)}{2} \right\rfloor \right\}.
$$

Since $(b-a) \leq k-3$, the lower bound on $x-y$ in this definition is indeed negative. An example is shown in Figure \ref{fig_s}.

Observe that the difference between the upper bound and the lower bound for $x-y$ in the definition of $S$ is exactly $k-1$. Therefore any two boxes in $S$ with values of $x-y$ that are congruent modulo $k$ must in fact have equal values of $x-y$, hence one dominates the other. It follows that $\cd(a,b,k) \geq |S|$. It remains to calculate the size of the set $S$.

The complement of $S$ can be written as the union of two disjoint parts.
\begin{eqnarray*}
T_1 &=& \left\{ (x,y) \in \lambda:\ y \geq x + 1 + \left\lceil \frac{k-1-(b-a)}{2} \right\rceil \right\}\\
T_2 &=& \left\{ (x,y) \in \lambda:\ x \geq y + 1 + \left\lfloor \frac{k-1+(b-a)}{2} \right\rfloor \right\}
\end{eqnarray*}

Both $T_1$ and $T_2$ are triangular regions in the upper left and lower right corners of $\lambda$. By determining the side lengths of these two regions, it follows that

\begin{eqnarray*}
|T_1| &=& \binom{ \lfloor (a+b-k+1)/2 \rfloor }{2},\\
|T_2| &=& \binom{ \lceil (a+b -k+1)/2 \rceil }{2}.
\end{eqnarray*}

Let $z = \frac{a+b-k+1}{2}$ and $\varepsilon = z - \lfloor z \rfloor$. Then

\begin{eqnarray*}
|T_1| + |T_2| &=& \frac12 (z - \varepsilon)(z - 1 - \varepsilon) + \frac12 (z + \varepsilon)(z-1+\varepsilon)\\
&=& z(z-1) + \varepsilon^2\\
&=& \left(z-\frac12\right)^2 + \varepsilon^2 - \frac14\\
&=& \left \lfloor \left( \frac{a+b-k}{2} \right)^2 \right\rfloor
\end{eqnarray*}

Therefore $|S| = ab - \left \lfloor \left( \frac{a+b-k}{2} \right)^2 \right\rfloor = \delta(a,b,k)$ (by the formula in Remark \ref{rem_threecases}). Since all the boxes of $S$ must have distinct labels, it follows that $\cd(a,b,k) \geq \delta(a,b,k)$.
\end{proof}

\begin{lemma} \label{l_cdupper}
For any positive integers $a,b,k$ with $k \geq 2$,
$$
\cd(a,b,k) \leq \delta(a,b,k).
$$
\end{lemma}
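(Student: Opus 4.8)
The plan is to prove the reverse inequality by exhibiting, for each fixed $a,b,k$, a \emph{single} $k$-uniform displacement tableau on $\lambda = \{1,2,\cdots,b\}\times\{1,2,\cdots,a\}$ that uses exactly $\delta(a,b,k)$ distinct labels. Since $\cd(a,b,k)$ is by definition the minimum, over all such tableaux, of the number of distinct labels, producing one such tableau immediately yields $\cd(a,b,k)\le\delta(a,b,k)$. As before I assume $a\le b$. The key observation is that the set $S\subseteq\lambda$ built in the proof of Lemma \ref{l_cdlower} already has exactly $\delta(a,b,k)$ boxes and the property that its boxes pairwise require distinct labels. Thus $S$ is the natural choice for the boxes carrying the distinct labels, and the task reduces to labeling the complement $\lambda\setminus S$ by \emph{reusing} the labels assigned to $S$.

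I would proceed case by case, matching the three cases of Remark \ref{rem_threecases}. In Case 1 ($k\ge a+b-1$) there is nothing to do: $S=\lambda$, and any strictly increasing assignment of $ab=\delta(a,b,k)$ distinct values, such as $t(x,y)=(y-1)b+x$, is a valid tableau. In Cases 2 and 3 the complement $\lambda\setminus S$ splits into the boxes whose diagonal coordinate $x-y$ is either too large or too small to lie in the window of diagonals occupied by $S$. For such a box I would reuse the label of the box of $S$ obtained by translating along a vector of the form $(+p,-(k-p))$ or $(-(k-p),+p)$ with $1\le p\le k-1$ --- that is, moving right-and-down to raise $x-y$ by $k$ for boxes with small diagonal, and left-and-up to lower $x-y$ by $k$ for boxes with large diagonal, iterating until $x-y$ lands in the window of $S$. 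Each such translate is incomparable to the original box and has the same value of $x-y$ modulo $k$, so condition (2) of Definition \ref{def_disp} is automatic; what must be checked is that the translate always lands inside $S$ rather than leaving the grid, and this is where the precise shape of $S$ together with the bounds $k\le b-a+2$ (Case 2) and $b-a+3\le k\le a+b-2$ (Case 3) is used.

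The main obstacle is condition (1): choosing the integer values so that $t$ is strictly increasing along every row and column, simultaneously within $S$, within the complement, and across the interface between them. To handle this I would fix a linear extension of the domination order on $\lambda$ in which each complement box is tightly coupled to the box of $S$ whose label it inherits, then define $t$ to be constant on each reuse-class and strictly increasing along this extension. The content is then to verify that merging a complement box with its $S$-translate never forces two comparable boxes to share a label nor inverts the order of two comparable boxes; since each reuse move changes the diagonal by a full period $k$ while strictly separating the two boxes in one coordinate, these verifications reduce to a finite list of inequalities in $a,b,k$. The floor/ceiling discrepancies arising when $a+b-k$ is odd, which in the lower bound produced the split $|T_1|+|T_2|$, will supply the only genuinely delicate bookkeeping in Case 3. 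Assembling the three cases gives $\cd(a,b,k)\le\delta(a,b,k)$, which together with Lemma \ref{l_cdlower} completes the proof of Proposition \ref{p_cd}.
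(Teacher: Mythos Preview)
Your proposal identifies the right target---exhibit a single $k$-uniform displacement tableau on $\lambda$ with exactly $\delta(a,b,k)$ labels---and this is exactly what the paper does. But what you have written is a plan rather than a proof: the two steps that carry all the content are deferred. First, you never choose the translation vector. Saying ``translate by some $(+p,-(k-p))$ and iterate until the diagonal lands in the window of $S$'' does not define a map from $\lambda\setminus S$ to $S$; different $p$ give different targets, and for many $p$ the translate leaves the grid before reaching $S$ (indeed in your Case~2 the set $S$ is not a diagonal band at all, so ``land in the window'' is already ill-posed). Second, once the reuse classes are fixed you must show that collapsing them and then passing to a linear extension is possible, i.e.\ that the relation generated by domination and reuse is acyclic. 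You acknowledge this (``never forces two comparable boxes to share a label nor inverts the order''), but asserting that it ``reduces to a finite list of inequalities in $a,b,k$'' is not a verification. These are not bookkeeping details; they are the lemma.

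The paper's construction is in fact a specific, fully worked instance of your scheme, and seeing it should clarify what was missing. One fixes $\ell=\min\bigl(a-1,\lceil (a+b-k)/2\rceil\bigr)$ so that $\delta(a,b,k)=(a-\ell)(b-\ell)+k\ell$, commits to the \emph{single} translation vector $(k+\ell-a,\,-(a-\ell))$ (your $p=k+\ell-a$, which shifts $x-y$ by exactly $k$), and then writes down the closed formula
\[
t(x,y)=(a-\ell)\bigl[(x-1)+q(k+\ell-a)\bigr]+r,\qquad q=\left\lfloor\frac{y-1}{a-\ell}\right\rfloor,\ \ r=y-q(a-\ell).
\]
Conditions (1) and (2) of Definition~\ref{def_disp} are then checked directly from the formula, and the distinct values are counted by parametrizing them as pairs $(B,r)$ with $1\le r\le a-\ell$. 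The moral is that once a single $p$ is fixed, the reuse structure becomes periodic and an explicit formula falls out; the abstract linear-extension argument you outline would, if carried through, amount to reproving the monotonicity of this formula without having it in hand.
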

\begin{proof}
We assume without loss of generality that $a \leq b$. The case $a=1$ can be checked directly (both $\cd(1,b,k)$ and $\delta(1,b,k)$ are equal to $b$), so we also assume that $a \geq 2$.

We give an explicit construction for $k$-uniform displacement tableaux with exactly $\delta(a,b,k)$ symbols.

\textit{Case 1:} $k \geq a+b-1$. In this case, $\delta(a,b,k) = ab$, so we can take $t$ to be any tableau with all distinct symbols.

\textit{Case 2:} $k \leq a+b-2$. Define $\ell = \min \left( a-1,\ \left\lceil \frac{a+b-k}{2} \right\rceil \right)$; this value is chosen so that $\delta(a,b,k) = (a-\ell)(b-\ell) + k \ell$. Note that our assumptions imply that $\ell \geq 1$. We will construct a tableau $t$ on $\lambda$ with exactly $(a-\ell)(b-\ell) + k \ell$ distinct symbols, which will establish that $\cd(a,b,k) \leq \delta(a,b,k)$.

\textit{Definition of $t$}. Informally, $t$ will be defined by first dividing the partition $\lambda$ into strips of width $1$ and height $a-\ell$ (with the top row of strips truncated to a smaller height), and filling each strip with consecutive numbers from bottom to top. In each row of strips, we fill the strips in order, from left to right. In order to minimize the distinct symbols used, we begin filling each row of strips before the previous row is complete: the first strip of one row is filled simultaneously with the $(k+\ell-a+1)$th strip of the row below.

Formally, define first a function $t: \lambda \rightarrow \ZZ_{>0}$ as follows.

\begin{eqnarray*}
t(x,y) &=& (a-\ell) \cdot \left[ (x-1) + q (k+\ell-a) \right] + r,\\
\mbox{where } q &=& \left\lfloor \frac{y-1}{a-\ell} \right\rfloor\\
\mbox{and } r &=& y - q \cdot (a-\ell).
\end{eqnarray*}

An example is shown in Figure \ref{fig_construction}.

\begin{figure}
\begin{tikzpicture}[scale=0.7]
\draw (0,0) -- (7,0) -- (7,7) -- (0,7) -- (0,0);
\draw (1,0) -- (1,7);
\draw (2,0) -- (2,7);
\draw (3,0) -- (3,7);
\draw (4,0) -- (4,7);
\draw (5,0) -- (5,7);
\draw (6,0) -- (6,7);
\draw (0,3) -- (7,3);
\draw (0,6) -- (7,6);
\draw[fill=lightgray] (0,6) rectangle (1,7);
\draw[fill=lightgray] (3,3) rectangle (4,6);
\draw[fill=lightgray] (6,0) rectangle (7,3);
\draw (0.500000,0.500000) node {$1$};
\draw (0.500000,1.500000) node {$2$};
\draw (0.500000,2.500000) node {$3$};
\draw (0.500000,3.500000) node {$10$};
\draw (0.500000,4.500000) node {$11$};
\draw (0.500000,5.500000) node {$12$};
\draw (0.500000,6.500000) node {$19$};
\draw (1.500000,0.500000) node {$4$};
\draw (1.500000,1.500000) node {$5$};
\draw (1.500000,2.500000) node {$6$};
\draw (1.500000,3.500000) node {$13$};
\draw (1.500000,4.500000) node {$14$};
\draw (1.500000,5.500000) node {$15$};
\draw (1.500000,6.500000) node {$22$};
\draw (2.500000,0.500000) node {$7$};
\draw (2.500000,1.500000) node {$8$};
\draw (2.500000,2.500000) node {$9$};
\draw (2.500000,3.500000) node {$16$};
\draw (2.500000,4.500000) node {$17$};
\draw (2.500000,5.500000) node {$18$};
\draw (2.500000,6.500000) node {$25$};
\draw (3.500000,0.500000) node {$10$};
\draw (3.500000,1.500000) node {$11$};
\draw (3.500000,2.500000) node {$12$};
\draw (3.500000,3.500000) node {$19$};
\draw (3.500000,4.500000) node {$20$};
\draw (3.500000,5.500000) node {$21$};
\draw (3.500000,6.500000) node {$28$};
\draw (4.500000,0.500000) node {$13$};
\draw (4.500000,1.500000) node {$14$};
\draw (4.500000,2.500000) node {$15$};
\draw (4.500000,3.500000) node {$22$};
\draw (4.500000,4.500000) node {$23$};
\draw (4.500000,5.500000) node {$24$};
\draw (4.500000,6.500000) node {$31$};
\draw (5.500000,0.500000) node {$16$};
\draw (5.500000,1.500000) node {$17$};
\draw (5.500000,2.500000) node {$18$};
\draw (5.500000,3.500000) node {$25$};
\draw (5.500000,4.500000) node {$26$};
\draw (5.500000,5.500000) node {$27$};
\draw (5.500000,6.500000) node {$34$};
\draw (6.500000,0.500000) node {$19$};
\draw (6.500000,1.500000) node {$20$};
\draw (6.500000,2.500000) node {$21$};
\draw (6.500000,3.500000) node {$28$};
\draw (6.500000,4.500000) node {$29$};
\draw (6.500000,5.500000) node {$30$};
\draw (6.500000,6.500000) node {$37$};
\end{tikzpicture}
\caption{The construction in the proof of lemma \ref{l_cdupper}, in the case $a=7,\ b=7,\ k=6$. The tableau is constructed by filling $3 \times 1$ strips in sequence. Although the labels of the tableau go up to $37$, there are four values that do not occur ($32,33,35,36$), so there are exactly $33$ distinct labels.} \label{fig_construction}
\end{figure}

\textit{Validity of $t$}. We first verify that this function does in fact give a $k$-uniform displacement tableau. Clearly $t(x+1,y) > t(x,y)$. The value $t(x,y+1)-t(x,y)$ is either $1$ (if $y \not \equiv 0 \pmod{a-\ell}$), or $(a-\ell)(k+\ell-a) - (a-\ell-1)$ otherwise. The latter quantity is equal to $(a - \ell)( k + \ell - a -1) + 1$, which is positive since $k + \ell -a - 1 \geq 0$ and $\ell \leq a-1$ (these follow from our choice of the number $\ell$). Hence $t(x,y)$ is strictly increasing in rows and columns; it remains to show that any two boxes $(x,y)$ with the same label in $t$ have values of $x-y$ that are congruent modulo $k$.

Since $1 \leq r \leq a - \ell$, two boxes satisfy $t(x,y) = t(x',y')$ if and only if $r = r'$ and $(x-1) + q(k+\ell-a) = (x'-1) + q'(k+\ell-a)$ (where $q',r'$ are obtained from $y'$ in the same manner as $q,r$ are obtained from $y$). The latter equation implies that 
$$
x - q (a - \ell) \equiv x' - q'(a-\ell) \pmod{k},
$$
and subtracting $r = r'$ from both sides gives $x-y \equiv x' - y' \pmod{k}$. So $t$ is indeed a $k$-uniform displacement tableau.

\textit{Number of symbols in $t$.} We now enumerate the number of distinct symbols that occur in $t$ on $\lambda$. The primary difficulty in this computation is that, as we have constructed it, the symbols of $t$ are not consecutive; some symbols are skipped.

Every value $t(x,y)$ for $(x,y) \in \lambda$ has the form $(a - \ell) B + r$, where $1 \leq r \leq a - \ell$; clearly two such values are equal if and only if they arise from equal values of $B$ and $r$, so it suffices to enumerate the number of distinct pairs $(B,r)$ which occur.

The number $B$ is equal to $(x-1) + q(k + \ell - a)$. The number $q$ depends on $y$ alone; its possible values are the integers $0$ through $\left\lfloor \frac{a-1}{a - \ell} \right\rfloor$, inclusive. Note that the largest possible value of $q$ is at least $1$, since our assumptions imply that $\ell \geq 1$. For each possible value of $q$ in $\{0,1,\cdots, \left\lfloor \frac{a-1}{a - \ell} \right\rfloor - 1\}$, the possible values of $y$ giving this value of $q$ give all possible values of $r$ (from $1$ to $a - \ell$). Since $x-1$ takes all values from $0$ to $b-1$ inclusive, this shows that all pairs $(B,r)$ satisfying the following inequalities will occur.

\begin{eqnarray*}
0 &\leq B \leq& b-1 + \left( \left\lfloor \frac{a-1}{a - \ell} \right\rfloor - 1 \right) (k + \ell -a)\\
1 &\leq r \leq& a - \ell
\end{eqnarray*}

This accounts for 
\begin{equation} \label{monster1}
(a - \ell) \left( b + \left( \left\lfloor \frac{a-1}{a - \ell} \right\rfloor - 1 \right) (k + \ell -a) \right) 
\end{equation}
distinct values of $t(x,y)$.

All of the remaining values of $B$ must occur for the largest possible value of $q$, namely $q = \left\lfloor \frac{a-1}{a-\ell} \right\rfloor$. For this value of $q$, the possible values of $r$ range from $1$ to $a - \left\lfloor \frac{a-1}{a-\ell} \right\rfloor (a - \ell)$, inclusive. For this value of $q$, the value of $B$ is distinct from those listed above if and only if $x$ is taken to be one of the $(k+\ell-a)$ largest values from the range $\{1,2,\cdots,b\}$ (here we use the fact that $k+\ell-a \leq b$, which follows from the assumption $a+b-k \geq 2$ and the choice of $\ell$). Therefore the number of values of $t(x,y)$ not accounted for is precisely

\begin{equation} \label{monster2}
\left( a - \left\lfloor \frac{a-1}{a-\ell} \right\rfloor (a - \ell) \right) (k + \ell - a).
\end{equation}

Adding the quantities (\ref{monster1}) and (\ref{monster2}) and rearranging terms gives the quantity $(a-\ell)(b-\ell) + k \ell$, which is therefore the number of distinct symbols in $t$. It follows that $\cd(a,b,k) \leq (a-\ell)(b-\ell) + k \ell$, as desired.
\end{proof}

Lemmas \ref{l_cdlower} and \ref{l_cdupper} together prove Proposition \ref{p_cd}.

\begin{cor} \label{cor_tropdim}
If $k$ and $\ell$ are relatively prime, then the dimension of $W^r_d(\Gg)$ is equal to $\ovr$.
\end{cor}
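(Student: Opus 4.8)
The plan is to assemble the corollary directly from Lemma \ref{l_td}, Proposition \ref{p_cd}, and the identity $\ovr = g - \delta(r+1,\,g-d+r,\,k)$ recorded just after Definition \ref{def_cd}; no new combinatorics is required. First I would set $a = r+1$ and $b = g-d+r$, observing that both are positive under the hypotheses of Situation \ref{sit_main} (since $r \geq 0$ and $g-d+r > 0$), so that $\lambda = \{1,\dots,b\}\times\{1,\dots,a\}$ is a genuine $a\times b$ rectangle and the quantity $\cd(a,b,k)$ from Definition \ref{def_cd} is defined. The hypothesis $\gcd(k,\ell)=1$ is exactly what makes Lemma \ref{l_td} applicable, translating the geometry of $W^r_d(\Gg)$ into the combinatorics of $k$-uniform displacement tableaux on this same $\lambda$.

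Next I would run Lemma \ref{l_td} in the following form: $W^r_d(\Gg)$ is nonempty precisely when there is a $k$-uniform displacement tableau on $\lambda$ with at most $g$ distinct labels---that is, precisely when $\cd(a,b,k)\le g$---and in that case $\dim W^r_d(\Gg) = g - \cd(a,b,k)$, the minimizing tableau using $\cd(a,b,k)$ labels. Proposition \ref{p_cd} replaces $\cd(a,b,k)$ by $\delta(a,b,k)$, and the identity above rewrites $g-\delta(a,b,k)$ as $\ovr$. This disposes of the nonempty case, with the parenthetical observation that $\cd(a,b,k)\le g$ is equivalent to $\ovr\ge 0$.

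The only step needing genuine care is the boundary between emptiness and nonemptiness. When $\delta(a,b,k) > g$, Lemma \ref{l_td} guarantees that no admissible tableau exists, so $W^r_d(\Gg)=\emptyset$; at the same time $\ovr = g-\delta(a,b,k) < 0$. Appealing to the convention stated in the introduction---that a negative value of the estimate signifies emptiness---this is exactly the assertion $\dim W^r_d(\Gg)=\ovr$ in that degenerate sense. The two cases $\delta(a,b,k)\le g$ and $\delta(a,b,k)>g$ together yield the claimed equality in all situations.

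I expect no real obstacle, since every substantive ingredient has already been established; the corollary is a matter of correctly matching indices and interpreting the emptiness convention. The single thing I would double-check is that the minimization range in Definition \ref{def_cd} matches the range $\{0,\dots,r'\}$ of Definition \ref{def_ovr} after the substitution, which holds because $\min(r+1,\,g-d+r)-1 = \min(r,\,g-d+r-1)=r'$.
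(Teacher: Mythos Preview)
Your proposal is correct and follows essentially the same route as the paper: apply Lemma \ref{l_td}, replace $\cd$ by $\delta$ via Proposition \ref{p_cd}, and invoke the identity $\ovr = g - \delta(r+1,g-d+r,k)$. The paper's proof is terser and first reduces to $d \le g-1$ by Riemann--Roch, but your direct check that $\min(r+1,g-d+r)-1 = r'$ makes that reduction unnecessary; your explicit handling of the empty case is likewise a detail the paper leaves to the standing convention.
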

\begin{proof}
By Riemann-Roch, we may assume without loss of generality that $d \leq g-1$. Then $\dim W^r_d(\Gg) = g - \cd(r+1,g-d+r,k)$ by Lemma \ref{l_td}, which is $g- \delta(r+1,g-d+r,k)$ by Proposition \ref{p_cd}. By definition, this is equal to $\ovr$.
\end{proof}

\section{Proofs of main theorems} \label{sec_proofs}

We can now combine our results from the previous two sections to prove the theorems from the introduction. We assume that Situation \ref{sit_main} holds throughout, i.e. integers $g,k,r,d$, a field $K$, and a general $k$-gonal curve $C$ are fixed, with the characteristic of $K$ meeting our mild restrictions. Before each proof we briefly recall the theorem statement.

Our main result is Theorem \ref{thm_main}, which asserts that in Situation \ref{sit_main}, $\ovr$ is an upper bound for $\dim W^r_d(C)$.

\begin{proof}[Proof of theorem \ref{thm_main}]
We first prove that the result holds when $K$ is a complete algebraically closed non-Archimedean field with nontrivial valuation. By Lemma \ref{lem_lexists}, there exists an integer $\ell$ such that $(\chr K, k, \ell)$ is admissible. By Lemma \ref{lem_lift} and Corollary \ref{cor_punchline}, there exists a smooth projective curve $C'$ over $K$ such that for all $r,d$,
$$
\dim W^r_d(C') \leq g - \cd(g-d+r,r+1,k),
$$
where $\cd$ is the function from Definition \ref{def_cd}. Proposition \ref{p_cd}, together with the fact that $\ovr = g - \delta(r+1,g-d+r,k)$, gives the inequality $\dim W^r_d(C') \leq \ovr$.

In particular, letting $r=1$, these bounds show that $W^1_d(C')$ is empty for all $d < k$ (the inequalities $d \leq k-1$ and $k \leq \frac{g+3}{2}$ imply that $\overline{\rho}_{g,k}(d,1) = d-k \leq -1$). So the gonality of $C'$ is at least $k$. By construction, $C'$ does have a degree $k$ line bundle with two linearly independent sections, so the gonality of $C'$ is exactly $k$. By the irreducibility of the $k$-gonal locus and upper semicontinuity, it follows that these upper bounds on dimension hold on a dense open subset of the set of $k$-gonal curves over the field $K$. Hence $\dim W^r_d(C) \leq \ovr$ for a \textit{general} $k$-gonal curve $C$ of genus $g$ over $K$.

\textit{Extension to arbitrary algebraically closed fields.} So far we have deduced the theorem only for a specific type of field. To obtain the result for all fields, observe that the coarse moduli space of curves of genus $g$ is a scheme of finite type over $\Spec \ZZ$, and the locus of $k$-gonal curves for which the theorem holds is a locally closed subscheme whose image in $\Spec \ZZ$ includes the generic point and $\Spec \textbf{F}_p$ for all $p$ not excluded by Situation \ref{sit_main}. Therefore it has points in every algebraically closed field of characteristic not excluded by Situation \ref{sit_main}.
\end{proof}

Theorem \ref{thm_cm} states that, under a characteristic $0$ hypothesis, $\unr$ is a lower bound for $\dim W^r_d(C)$; it follows from the main theorem of \cite{cm02} by an elementary argument. We give this elementary argument below for completeness. First, we restate the main theorem of \cite{cm02} in the notation of this paper.\\

\noindent\textbf{Main theorem of \cite{cm02}}. \textit{
Let $C$ be a general $k$-gonal curve of genus $g$ over the complex numbers, and let $r,d$ be positive integers. Let $\ell \in \{0,1,\cdots,r\}$ be an integer satisfying the following hypotheses.
\begin{enumerate}
\item $\ell \geq r- k$,
\item $r+1 - \ell$ divides either $r$ or $r+1$, and
\item $\rho_g(d,r-\ell) - \ell k \geq \max(0,\rho_g(d,r))$.
\end{enumerate}
Then $W^r_d(C)$ has an irreducible component of dimension $\rho_g(d,r-\ell) - \ell k$.
}

\begin{proof}[Proof of Theorem \ref{thm_cm}]
Note first of all that to prove this results for all algebraically closed fields of characteristic $0$, it suffices to verify the result for the field $K = \textbf{C}$ of complex numbers, by standard arguments (e.g. as in the last paragraph of the proof of Theorem \ref{thm_main}). We use the notation of Situation \ref{sit_main}. By Riemann-Roch, we may assume without loss of generality that $d \leq g-1$, so that $r' = r$ in the notation of Definition \ref{def_ovr}. Assume also that $\unr \geq 0$ and $\unr > \rho_g(d,r)$, since otherwise the desired result is either vacuous or follows from $\dim W^r_d(C) \geq \rho_g(d,r)$. Note that this implies $r > 0$.

Let $\ell$ be whichever integer $\ell \in \{0,1,r-1,r\}$ is closest to $\ell_0 = \frac{g-d+2r - k + 1}{2}$, where we choose the largest value in case of a tie. In this case $\rho_g(d,r-\ell) - \ell k = \unr$. So it suffices to verify hypotheses (1), (2), and (3) in the theorem statement above.

Hypothesis (3) follows from our assumptions. Hypothesis (2) holds for all $\ell \in \{0,1,r-1,r\}$ for elementary reasons.  It remains to verify hypothesis (1). If $\ell \in \{r-1,r\}$ then hypothesis (1) follows from $k \geq 2$. We are assuming that $\unr > \rho_g(d,r)$, hence $\ell \neq 0$; it remains to consider the case $\ell = 1$. We may assume that $r \geq 3$, otherwise either $r=0$, $\ell = 1 = r-1$ or $\ell = 1 = r$. Since $\ell = 1$ must be closer to $\ell_0$ than $r-1$ (which is strictly greater than $1$), it follows that $\ell_0 < \frac12 r$. From the definition of $\ell_0$, this implies that $r < k + d - g - 1$. We are assuming that $d < g$, hence $r < k - 1$. So certainly $1 \geq r - k$ in this case, and hypothesis (1) holds.

Therefore all three hypotheses hold, and the main theorem of \cite{cm02} implies that $W^r_d(C)$ has a component of dimension $\unr$.
\end{proof}

Theorem \ref{thm_eq} asserts that, in characteristic $0$, when $k \leq 5$ or $k \geq \frac15g+2$, the dimension of $W^r_d(C)$ is exactly $\ovr$ (unless this number is negative, in which case $W^r_d(C)$ is empty). The proof is elementary, given Theorems \ref{thm_main} and \ref{thm_cm}.

\begin{proof}[Proof of Theorem \ref{thm_eq}]
We must show that if $k \leq 5$ or $k \geq \frac15 g + 2$, then either $\unr = \ovr$ or $\ovr < 0$. We assume without loss of generality that $d \leq g-1$.

Suppose that $\unr \neq \ovr$. By the discussion in \ref{ss_gap},
$$
2 \leq \frac{g-d+2r-k+1}{2} \leq r-2.
$$

Define two auxiliary variables, as follows.
\begin{eqnarray*}
\ell_0 &=& \frac{g-d+2r-k+1}{2}\\
\delta &=& \frac{g-d-1}{2}
\end{eqnarray*}

We deduce from the inequalities above that $\ell_0 \geq 2$ and $\delta \leq \frac{k}{2} - 3$.  An elementary (but tedious) rearrangement of terms shows that, for any integer $\ell$,
$$
\rho_g(d,r-\ell) -k \ell = g - (\ell - \ell_0)^2 - \left( \frac{k}{2} \right)^2 - k \ell_0 + \delta^2.
$$
The maximum value occurs at $\ell = \lfloor \ell_0 \rfloor$, hence
\begin{eqnarray*}
\ovr &=& \rho_g(d,r-\lfloor \ell_0 \rfloor) - k \lfloor \ell_0 \rfloor\\
&\leq& g - (\lfloor \ell_0 \rfloor - \ell_0)^2 - \left( \frac{k}{2} \right)^2 - k \ell_0 + \delta^2\\
&\leq& g - 0 - \left( \frac{k}{2} \right)^2 - k \cdot 2 + \left( \frac{k}{2} - 3 \right)^2\\
&\leq& g - 5k + 9.
\end{eqnarray*}
In the case $k > \frac15 g + 2$, it follows that $\ovr \leq -1$. In the case $k \leq 5$, the inequality $\delta \leq \frac{k}{2} -3$ would imply that $\delta < 0$, which contradicts the assumption that $d \leq g-1$. Therefore we conclude that in either case, $\unr = \ovr$ unless $\ovr < 0$. The result now follows from Theorems \ref{thm_main} and \ref{thm_cm}.
\end{proof}

Theorem \ref{thm_howgeneral} characterizes, in characteristic $0$, those values of $d,r$ for which a general $k$-gonal curve of genus $g$ satisfies $\dim W^r_d(C) = \rho_g(d,r)$ (i.e. the same as for a general curve of genus $g$). This occurs if and only if $r=0,\ g-d+r = 1,$ or $g-k \leq d-2r$. 

\begin{proof}[Proof of Theorem \ref{thm_howgeneral}]
The $r=0$ case (and dually, the $g-d+r = 1$ case) follow from the fact that the $d$th symmetric product of $C$ surjects onto $W^0_d(C)$, so assume that $r \geq 1$ and $g-d+r \geq 2$. Let $\ell_0$ be as in Section \ref{ss_gap}. Then $\ell_0 \leq \frac12$ is equivalent to $g-k \leq d-2r$; the discussion in Section \ref{ss_gap} shows that in this case $\dim W^r_d(C) = \rho_g(d,r)$. For the converse, suppose that $g-k > d - 2r$; equivalently, $\ell_0 \geq 1$. Then $\rho_g(d,r-1) - k > \rho_g(d,r)$, hence $\unr > \rho_g(d,r)$. By Theorem \ref{thm_cm}, $\dim W^r_d(C) > \rho_g(d,r)$, as desired.
\end{proof}

\bibliography{main}{}
\bibliographystyle{alpha}

\end{document}